\theoremstyle{plain}
\newtheorem{te}{Theorem}[section]
\newtheorem{theorem}[te]{Theorem}
\newtheorem{lemma}[te]{Lemma}
\newtheorem{corollary}[te]{Corollary}
\newtheorem{proposition}[te]{Proposition}
\theoremstyle{definition}
\newtheorem{definition}[te]{Definition}
\newtheorem{example}[te]{Example}
\theoremstyle{remark}
\newtheorem{remark}[te]{Remark}
\newcommand{\field}[1]{\mathbb{#1}}
\newcommand{\R}{\field{R}}
\newcommand{\N}{\field{N}}
\newcommand{\eps}{\varepsilon}
\newcommand{\lie}[1]{{\mathfrak{#1}}}
\newcommand{\g}{{\lie{g}}}
\newcommand{\Sl}{\lie{sl}}
\newcommand{\hook}{\lrcorner\,}
\newcommand{\st}{\;|\;}
\newcommand{\tran}[1]{\prescript{t}{}{#1}}
\DeclareMathOperator{\dd}{\textsl{d}}
\DeclareMathOperator{\tr}{tr}
\DeclareMathOperator{\Id}{Id}
\DeclareMathOperator{\id}{Id}
\DeclareMathOperator{\Span}{span}
\DeclareMathOperator{\Ric}{Ric}
\DeclareMathOperator{\ric}{ric}
\DeclareMathOperator{\ad}{ad}
\DeclareMathOperator{\Tr}{tr}
\DeclareMathOperator{\Der}{Der}
\numberwithin{equation}{section}
\newcolumntype{C}{>{$}c<{$}}
\newcolumntype{L}{>{$}l<{$}}
\newcolumntype{R}{>{$}r<{$}}
\title{New Special Einstein Pseudo-Riemannian Metrics on Solvable Lie Algebras}
\author{Federico A. Rossi}
\date{\today\ \currenttime}
\begin{document}
\maketitle

\begin{abstract}
We exhibit a concrete procedure to construct Einstein pseudo-K\"ahler and para-K\"ahler metrics on solvable Lie algebras. We apply this method to classify all the rank-one pseudo-Iwasawa extensions of type-\ref{cond:nil4} nilsoliton in low dimension. We prove that such metrics exists on the rank-one pseudo-Iwasawa extension of the generalized Heisenberg Lie algebra.

Further ideas and suggestions to produce more special Einstein pseudo-Riemannian metrics are exposed.
\end{abstract}

\setcounter{tocdepth}{2}

\tableofcontents

\renewcommand{\thefootnote}{\fnsymbol{footnote}}
\footnotetext{\emph{MSC class 2020}: \emph{Primary} 53C50; \emph{Secondary} 53C25, 53C30, 22E25, 32M10}
\footnotetext{\emph{Keywords}: Einstein metrics, nilsolitons, solvable Lie algebras, pseudo-Riemannian homogeneous metrics, complex structures, para-complex structures}
\renewcommand{\thefootnote}{\arabic{footnote}}

\section{Introduction}
One of the most classical and studied problem in differential geometry deals with the existence and construction of \emph{Einstein metrics} $g$ on a given differentiable manifold $M$, that is metrics $g$ that satisfy the Einstein equation
\begin{equation}\label{eq:Einstein}
 \Ric_g=\lambda\Id\qquad \text{for some } \lambda\in\R.
\end{equation}
Those metrics are special solutions to the more general Einstein field equation $\ric_g+\lambda g= \kappa T$ where $T$ is the \emph{stress-energy tensor}, $\lambda\in\R$ is the \emph{cosmological constant} and $\kappa\in\R$ is the \emph{Einstein gravitational constant} (see e.g.~\cite{Weinberg:Gravitation}).
If $\lambda=0$ the metric is called \emph{Ricci-flat}. Ricci-flat metrics are related to string theory and they are fixed point of the Ricci-flow.

The special case of Riemannian homogeneous manifold is a very active field of research and many properties are known.
Solvmanifolds are particular homogeneous manifolds: those are simply connected solvable Lie group with a left-invariant metric. The Alekseevsky conjecture, stating that all Riemannian homogeneous Einstein manifolds of negative scalar curvature are solvmanifolds, is one of the most interesting problem in this setting, and very recently B\"{o}hm and Lafuente in the preprint~\cite{BohmLafuente:AlekseevskyConjecture} show a proof of it. Einstein Riemannian solvmanifold are constructed using nilsoliton metrics and the theory is well understood thanks to the works of many authors (see e.g. \cite{Lauret:Einstein_solvmanifolds,Nikolayevsky,Heber:noncompact}).

On the other hand, a K\"ahler metric represents another interesting object in geometry. A K\"ahler structure on a Lie algebras $\g$ is a triple $(J,\omega,g)$ consisting of a complex structure $J$, a symplectic structure $\omega$, a compatible Riemannian metric $g$ (i.e. $g(JX,Y)=\omega(X,Y)$) and such that $J,\omega,g$ are parallel: in some sense the K\"ahler geometry sits between the complex, the symplectic and the metric geometry. Special metrics are the Einstein K\"ahler ones, that combine the properties of both the previous classes. In small dimension one can find some examples in \cite{FerFinMan:G2EinSolv,Manero:phdThesis} (see also Remark~\ref{rem:ManeroMisprint}).

In this article we address the construction of invariant ``special'' Einstein metrics with nonzero scalar curvature on solvmanifolds in the pseudo-Riemannian setting: in particular we will deal with \emph{Einstein pseudo-K\"ahler metrics} and \emph{Einstein para-K\"ahler metrics}.

The pseudo-K\"{a}hler condition is the analogous notion of K\"ahler condition in the more general pseudo-Riemannian setting. Those special metrics can be found quite often in literature, and for Lie algebras some classification are obtained expecially in low dimension
(e.g. Ovando in~\cite{Ovando06:pseudoKahlerDim4} classifies pseudo-K\"ahler Lie algebras up to dimension $4$, while in~\cite{CorFerUga04:PseudoK6Nil} those metrics are studied in the nilpotent context).

Einstein para-K\"ahler metrics incorporate para-K\"ahler structure, where the complex structure $J$ is replaced by a \emph{para-complex} structure $K$ (i.e. an endomorphism of $TM$ such that $K^2=+\id$). Those special metrics were studied in~\cite{AleMedTom09:HomParaKEinstein} and~\cite{BenBou15:PKandHyperPK}. It is worth remembering that pseudo-K\"ahler (resp. para-K\"ahler) structures are the point of contact of symplectic, metric and complex (resp. para-complex) geometry, but where the metric is a more general pseudo-Riemannian metric.

Although Einstein condition on the one hand and ``special'' (pseudo-K\"ahler or para-K\"ahler) structures on the other have been studied extensively but separately in the literature, less has been done to study these two conditions simultaneously. This article aims to go in this direction, showing a concrete strategy for building examples of special pseudo-Riemannian Einstein metrics. In fact, our recent papers \cite{ContiRossi:IndefiniteNilsolitons,ContiRossi:NiceNilsolitons}, jointly with D.~Conti, have allowed us to construct new examples of Einstein homogeneous manifold with indefinite metrics. The new technique developed may be combined with other tools to address different problems about the construction of special metrics, as we aim to show in the present paper.

More precisely, we will deal with Einstein metrics that are not Ricci-flat, and we will show how to construct Einstein pseudo-K\"ahler metrics and Einstein para-K\"ahler metrics on solvable Lie algebra $\g$. Those structures can be easily transferred to invariant analogous structures on a connected, simply-connected Lie group $G$ whose Lie algebra is $\g$; and conversely, a left-invariant Einstein pseudo-K\"ahler (or para-K\"ahler) metric on a solvable Lie group $G$ can be viewed as a special (linear) structure on the corresponding Lie algebra $\g$.

\medskip
The paper is organized as follows. In Section~\ref{sec:preliminaries} we recall some useful preliminary notions. In the next Section~\ref{sec:SpecialMetrics}, we present the special metrics that we want to investigate, and in Section~\ref{section:NilpotentEinstein} we expose the problem of Einstein metrics, pseudo-K\"ahler and para-K\"ahler metrics on nilpotent Lie group.

Section~\ref{sec:PseudNilsolitonSummary} contains a brief summary of the results contained in~\cite{ContiRossi:IndefiniteNilsolitons}, and it explains how to obtain Einstein metrics with nonzero scalar curvature on solvable Lie algebras starting from pseudo-Riemannian nilsoliton metrics.

In Section~\ref{sec:strategy} we present our concrete strategy to build Einstein pseudo-K\"ahler (para-K\"ahler) metrics with nonzero scalar curvature. We also apply this strategy to obtain new examples, and we conclude this section with a classification of rank-one pseudo-Iwasawa extension of nice diagonal nilsoliton (Theorem~\ref{thm:NiceDiagonalPseudoIwasawaLowDim}).

Finally, in the last Section~\ref{sec:examples}, we introduce new different ideas that originate from the strategy explained in Section~\ref{sec:strategy}. Those ideas produce other new examples (Examples~\ref{ex:DoubleExtension421:1} and Example~\ref{ex:521:2ParaStorta}) and generalizations (Example~\ref{ex:GeneralizedHeisenberg} and Theorem~\ref{thm:GenerHeisemberg}). Thus this list may plot new routes to many other examples or to new results, and may help the comprehension of Einstein pseduo-Rimannian homogeneous manifolds.

\bigskip
\noindent \textbf{Acknowledgments:} This paper is an expanded version of a talk given by the author to the meeting ``Cohomology of Complex Manifolds and Special Structures -- II'' (Levico, TN, Italy, 5-9 July 2021).
The author would like to thank the organizers Prof. C.~Medori, Prof. M.~Pontecorvo and Prof. A.~Tomassini.
The author wishes to express his gratitude to Prof. D.~Conti for useful discussions, suggestions and clear comments on the topics presented in this paper.
The author acknowledge GNSAGA of INdAM and the Young Talents Award of Universit\`{a} degli Studi di Milano-Bicocca joint with Accademia Nazionale dei Lincei.

\section{Invariant Structures on Lie Groups}\label{sec:preliminaries}

We recall some definitions that we will use in the sequel. Given a Lie group $G$, we will denote by $\g$ its Lie algebra.

We will consider \emph{left-invariant pseudo-Riemannian metrics} $g$ on a given Lie group $G$; these will be expressed as (in)definite scalar products $g$ (i.e. bilinear nondegenerate symmetric forms) on the corresponding Lie algebra $\g$, and the pair $(\g,g)$ will be called a \emph{metric Lie algebra}. It follows that also the Levi-Civita connection $\nabla$, the Riemann curvature $R$, the Ricci operator $\Ric$ and the Ricci curvature $\ric$ are left-invariant, so they can be expressed as linear tensors on $\g$. If the metric is positive definite, it is called \emph{Riemannian metric}, otherwise it is said to be an \emph{indefinite metric}. Any left-invariant endomorphism $E$ of $G$ can be expressed as a linear endomorphism on $\g$, again denoted by $E$. A left-invariant \emph{symplectic structure} on a Lie group $G$ is a left-invariant closed $2$-form $\omega$ of maximal rank, and it is given by a nondegenerate closed $2$-form $\omega$ on the Lie algebra $\g$.

The \emph{lower central series} of a Lie algebra $\g$ is recursively defined as $\g^0:=\g$, $\g^i:=[\g,\g^{i-1}]$, and a Lie algebra is called \emph{nilpotent} if the lower central series becomes the trivial subspace $\{0\}$, i.e. if there exists a $s\in\N$ such that $\g^s=\{0\}$, in this case the minimum $s$ such that $\g^s=\{0\}$ is called \emph{step}.
The \emph{derived series} of $\g$ is defined as $\mathfrak{a}_0:=\g$, $\mathfrak{a}_i:=[\mathfrak{a}_{i-1},\mathfrak{a}_{i-1}]$. If there exist a $r\in\N$ such that $\mathfrak{a}_r=\{0\}$ then the Lie algebra is said to be \emph{solvable}, and the minimum integer $r$ is called \emph{deep}. The dimension of a complementary subspace of $\g'=[\g,\g]=\lie{a}_1$ is called \emph{rank}, so a \emph{rank-one solvable Lie algebra} as a vector space can be written as $\g=\g'\oplus \Span{X}$ for some $X\in\g$. A Lie algebra is \emph{unimodular} if $\tr\ad (X)=0$ for any $X\in\g$.
Obviously a nilpotent Lie algebra is also solvable and unimodular.

A metric is \emph{Einstein} if $\Ric= \lambda g$ for some constant $\lambda\in\R$ (equiv. $\ric = \lambda \Id$). The metric is called \emph{Ricci-flat} if $\lambda= 0$. The \emph{scalar curvature} will be denoted by $s:=\tr(\Ric)$, so Einstein metric with nonzero scalar curvature satisfies $s=n\lambda\neq0$.

An invariant \emph{almost complex structure} $J$ is an endomorphism of $\g$ such that $J^2=-\Id$, and an invariant endomorphism $K$ of $\g$ such that $K^2=+\Id$ whose eigenspaces have the same dimensions is called \emph{almost para-complex structure}. The \emph{torsion tensor} (or \emph{Nijenhuis tensor}) of an endomorphism $E$ is defined as
\[N_E(X,Y):=[EX, EY ]- E[EX, Y ]- E[X, EY ]+ E^2[X,Y].\]
If the Nijenhuis tensor of an almost complex structure $J$ (resp. of an almost para-complex structure $K$) vanishes, then the almost complex structure (resp. the almost para-complex structure) is called \emph{integrable} or simply a \emph{complex structure} (resp. a \emph{para-complex structure}).

We recall that an almost para-complex structure $K$ is equivalent to a decomposition of the Lie algebra $\g$ in two vector subspaces $\g_{+},\g_{-}$ of the same dimension $n$, (namely the eigenspaces of $K$), and $K$ is integrable if, and only if, the eigenspaces are involutive (i.e. $\g_{+},\g_{-}$ are subalgebras of $\g$).

A \emph{solvmanifold} $(G,g)$ is a simply connected solvable Lie group $G$ with a left-invariant metric $g$. A \emph{nilmanifold} $\Gamma\backslash G$ is a compact quotient of a nilpotent Lie group $G$ by a cocompact discrete subgroup $\Gamma$. By Mal\textquotesingle cev criterion \cite{Malcev}, a nilpotent Lie group has a cocompact discrete subgroup if and only if its Lie algebra $\g$ admits a basis with rational structure constants.

In the sequel of this paper, we will deal with left-invariant objects on Lie group $G$, so we will identify them with their linear counterparts on the Lie algebra $\g$. We also want to study Einstein metrics with nonzero scalar curvature. We remark that tensors on $\lie{g}$ define left-invariant tensors on the connected simply-connected lie group $G$ with Lie algebra $\g$, hence all our following discussion can be translated into Lie group language.

\section{Pseudo-Riemannian Einstein Special Metrics}\label{sec:SpecialMetrics}

Given an (almost) complex structure $J$ on a Lie algebra $\g$, a pseudo-Riemannian metric $g$ is a \emph{(almost) pseudo-Hermitian metric} if
\[g(JX,JY)=g(X,Y),\quad X,Y\in\g.\]
In this case we define {\emph{fundamental $2$-form} as \[\omega(X,Y)=g(JX,Y),\quad X,Y\in\g.\]
The Hermitian metric $g$ is \emph{(almost) pseudo-K\"{a}hler} if the fundamental $2$-form is closed, i.e. $\dd\omega=0$.

It follows directly from the definition that pseudo-K\"{a}hler geometry is complex, symplectic and the pseudo-Riemannian metric is compatible with both those structures. Moreover in the integrable case we get that $\nabla J=\nabla g=\nabla \omega=0$.

Similarily, a compatibility condition between a para-complex structure $K$ and a pseudo-Riemannian metric $g$ can be given. Namely a pseudo-Riemannian metric $g$ is an \emph{(almost) para-Hermitian} metric if
\[g(KX,KY)=-g(X,Y),\quad X,Y\in\g.\]
The \emph{fundamental $2$-form} $\omega$ is the nondegenerate form defined by
\[\omega(X,Y)=g(KX,Y),\quad X,Y\in\g\]
and an (almost) para-Hermitian metric is \emph{(almost) para-K\"{a}hler}} if $\dd \omega=0$. From the definitions, we see that any (almost) para-Hermitian $g$ has neutral signature $(n,n)$, and the eigenspaces of $K$ are null spaces for $g$.

Note that para-K\"{a}hler geometry is para-complex, symplectic and the metric is compatible with both those structures. Furthermore, easy computations show that in this case $K, g$ and $\omega$ are all parallel with respect to $\nabla$, the Levi-Civita connection of $g$.

If we also require the metric $g$ to be Einstein, then we deal with \emph{Einstein (almost) pseudo-K\"{a}hler} and \emph{Einstein (almost) para-K\"{a}hler} metrics.

\section{Einstein pseudo-Riemannian nilpotent Lie algebras}\label{section:NilpotentEinstein}

Our goal is to construct Einstein pseudo-K\"{a}hler (or para-K\"{a}hler) metrics on solvable Lie algebras with nonzero scalar curvature. To this end, we start by looking at the nilpotent case.

We begin recalling some important results of Milnor and Dotti Miatello about the (almost)-K\"ahler Einstein on solvable Lie algebra in the Riemannian case:

\begin{theorem}[{\cite{Milnor:curvatures}}]
Let $\g$ be a nilpotent not Abelian Lie algebra, then it has no Einstein Riemannian invariant metric.
\end{theorem}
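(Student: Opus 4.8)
The plan is to follow Milnor's classical computation \cite{Milnor:curvatures} of the Ricci curvature of a left-invariant Riemannian metric and to specialize it to the nilpotent case. Fix a $g$-orthonormal basis $e_1,\dots,e_n$ of $\g$, with $n=\dim\g$. Starting from the Koszul formula $2g(\nabla_XY,Z)=g([X,Y],Z)-g([Y,Z],X)+g([Z,X],Y)$ one obtains, after a standard but somewhat lengthy manipulation, Milnor's expression for the Ricci quadratic form. Two simplifications occur precisely because $\g$ is nilpotent: it is unimodular, so the vector $H$ characterized by $g(H,X)=\tr\ad X$ vanishes; and every $\ad X$ is a nilpotent endomorphism (Engel), so $\sum_i g([X,[X,e_i]],e_i)=\tr\big((\ad X)^2\big)=0$. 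What is left is
\[
 \ric(X,X)=-\frac12\sum_{i=1}^n\big|[X,e_i]\big|^2+\frac14\sum_{i,j=1}^n g([e_i,e_j],X)^2 .
\]

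From this formula I would extract two facts. \emph{First, the scalar curvature is strictly negative.} Tracing over the orthonormal basis and using $\sum_k g([e_i,e_j],e_k)^2=\big|[e_i,e_j]\big|^2$ gives
\[
 s=\sum_k\ric(e_k,e_k)=-\frac12\sum_{i,j}\big|[e_i,e_j]\big|^2+\frac14\sum_{i,j}\big|[e_i,e_j]\big|^2=-\frac14\sum_{i,j}\big|[e_i,e_j]\big|^2\le 0 ,
\]
and equality holds exactly when all brackets vanish, i.e. only if $\g$ is Abelian; since by hypothesis $\g$ is not Abelian, $s<0$.

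\emph{Second, there is a direction of strictly positive Ricci curvature.} Let $s_0$ be the step of $\g$; as $\g$ is not Abelian, $s_0\ge 2$, and the last nonzero term $\g^{s_0-1}$ of the lower central series satisfies $\g^{s_0-1}\subseteq[\g,\g]$ and $[\g,\g^{s_0-1}]=\g^{s_0}=\{0\}$, so every $X\in\g^{s_0-1}\setminus\{0\}$ is central. For such an $X$ the first sum in the Ricci formula vanishes, hence $\ric(X,X)=\frac14\sum_{i,j} g([e_i,e_j],X)^2\ge 0$; moreover it is strictly positive, since otherwise $X$ would be orthogonal to every $[e_i,e_j]$, hence to $[\g,\g]$, which together with $X\in[\g,\g]$ would force $X=0$.

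Finally I would combine the two facts. If $g$ were Einstein, $\ric=\lambda g$, then evaluating at the unit vector $X/|X|$ from the second step gives $\lambda=\ric(X/|X|,X/|X|)>0$, whereas $s=n\lambda$ together with $s<0$ forces $\lambda<0$ — a contradiction; note that the Ricci-flat case $\lambda=0$ is already excluded by the positive direction alone. Hence $\g$ admits no left-invariant Einstein Riemannian metric. The main technical step is the derivation of Milnor's curvature formula from the Koszul formula and the definition of the Ricci tensor, which is routine linear algebra; everything after that is immediate, so I do not expect a genuine obstacle here.
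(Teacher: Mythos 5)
Your argument is correct and is essentially Milnor's original proof, which the paper only cites without reproducing: the unimodularity and the vanishing of $\tr\big((\ad X)^2\big)$ reduce the Ricci formula as you state, the trace gives $s<0$ for non-abelian $\g$, and a nonzero central element of $[\g,\g]$ gives a direction of positive Ricci curvature, which is incompatible with $\ric=\lambda g$. No gaps.
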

By Milnor's result the only nilpotent Lie algebra admitting an Einstein metric is the abelian Lie algebra, and in this case the metric is Ricci-flat. This is a more general fact concerning unimodular solvable Lie algebra, as stated by the
\begin{theorem}[{\cite{Dotti:RicciCurvature}}]
Let $\g$ be a unimodular solvable Lie algebra, then any Einstein Riemannian metric $g$ on $\lie{g}$ is flat, hence Ricci-flat.
\end{theorem}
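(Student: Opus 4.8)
The plan is to show that an Einstein left-invariant metric on a unimodular solvable Lie group is flat; ``hence Ricci-flat'' is then automatic. I would work on the metric Lie algebra $(\g,g)$ and use Milnor's formula for the Ricci operator~\cite{Milnor:curvatures}: fixing an orthonormal basis $e_1,\dots,e_n$ and letting $H\in\g$ be the mean curvature vector, $g(H,X)=\tr(\ad X)$, one has
\[
\Ric=M-\tfrac12 B-\tfrac12\bigl(\ad_H+\ad_H^{\,t}\bigr),
\]
where $B$ is the symmetric operator $g(BX,Y)=\tr(\ad X\,\ad Y)$ and $M$ is the symmetric operator with $g(MX,Y)=-\tfrac12\sum_{i,j}g([X,e_i],e_j)\,g([Y,e_i],e_j)+\tfrac14\sum_{i,j}g([e_i,e_j],X)\,g([e_i,e_j],Y)$. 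Two reductions are immediate: unimodularity is exactly $H=0$, so $\Ric=M-\tfrac12 B$; and solvability gives that $\g'=[\g,\g]$ is a nilpotent ideal (so $\ad_w$ is nilpotent for every $w\in\g'$) and, by Cartan's criterion, that $B$ vanishes on $\g'$, in fact $B(\g',\g)=0$.

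Suppose now $\Ric=\lambda\,\Id$. The subspaces $\g'$ and $\mathfrak{a}:=(\g')^{\perp}$ are $\Ric$-invariant and $B$ vanishes on $\g'$, so I would take traces block by block. Since every bracket lies in $\g'$, one gets $\tr(M|_{\g'})=-\tfrac12\sum_{w}\lVert\ad_w\rVert^{2}+\tfrac14\sum_{i,j}\lVert[e_i,e_j]\rVert^{2}$, while on $\mathfrak{a}$ the brackets are orthogonal to $\mathfrak{a}$, so $\tr(M|_{\mathfrak{a}})=-\tfrac12\sum_{a}\lVert\ad_a\rVert^{2}$. Writing $\ad_a=S_a+A_a$ for the symmetric and skew parts, the Einstein equation on the $\mathfrak{a}$-block gives, after a short computation,
\[
\ric(a,a)=-\lVert S_a\rVert^{2}\ \ \text{for every }a\in\mathfrak{a},\qquad\text{hence}\qquad \lambda\,\dim\mathfrak{a}=-\sum_{a}\lVert S_a\rVert^{2}\le 0 .
\]
Thus $\lambda\le 0$. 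If $\lambda=0$ the metric is Ricci-flat, and a homogeneous Ricci-flat Riemannian metric is flat (Alekseevsky--Kimelfeld), so we are done.

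The remaining case $\lambda<0$ is the heart of the matter. The displayed identity already forces $S_a\neq0$ for some $a$. Pushing the block analysis further --- using $B|_{\g'}=0$, the nilpotency of $\ad_w$ for $w\in\g'$, and the off-diagonal part of $\Ric=\lambda\,\Id$ (which makes the families $\{\ad_w:w\in\g'\}$ and $\{\ad_a:a\in\mathfrak{a}\}$ mutually orthogonal in $\mathrm{End}(\g)$) --- the Einstein equation restricted to $\g'$ turns into quantitative relations among the structure constants of the nilpotent algebra $\g'$ and of the $\mathfrak{a}$-action on it; iterating down the derived series $\g\supset\g'\supset\g''\supset\cdots$ and comparing with Milnor's Ricci estimates for nilpotent Lie algebras (the mechanism behind the theorem quoted above) should force all relevant brackets to vanish, i.e.\ $\g$ abelian, contradicting $\lambda<0$. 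The main obstacle is precisely this last step: one cannot reduce to ``the Ricci curvature takes both signs'' --- e.g.\ $\mathrm{Sol}^3$ carries non-flat left-invariant metrics with $\ric\le0$ everywhere --- so the full operator equation $\Ric=\lambda\,\Id$, not merely its trace, must be exploited, and the non-nilpotent directions must be controlled together with the propagation of the Einstein condition through the nilpotent derived algebra.
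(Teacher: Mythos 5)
The paper does not actually prove this statement; it is quoted directly from Dotti Miatello's paper \cite{Dotti:RicciCurvature}, so there is no internal argument to compare yours against. Judged on its own, your proposal is sound up to a point --- Milnor's formula, the reduction $H=0$ from unimodularity, $B(\g,\g')=0$ from Cartan's criterion, the identity $\ric(a,a)=-\lVert S_a\rVert^2$ for $a\perp\g'$, hence $\lambda\le 0$, and the disposal of $\lambda=0$ via Alekseevsky--Kimelfeld are all correct --- but the case $\lambda<0$, which is the entire content of the theorem beyond these elementary reductions, is not proved. You say the block analysis ``should force all relevant brackets to vanish''; that is a statement of intent, not an argument. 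Two specific steps are unsupported: (i) the claim that the off-diagonal Einstein equations make the families $\{\ad_w\colon w\in\g'\}$ and $\{\ad_a\colon a\in\mathfrak a\}$ mutually orthogonal in $\mathrm{End}(\g)$ does not follow from anything you have written (the off-diagonal condition $\ric(w,a)=0$ is a condition on $M-\tfrac12B$, not on trace pairings of adjoint maps); and (ii) the proposed endgame ``conclude $\g$ is abelian'' cannot be reached by any mechanism that does not use the strict negativity of $\lambda$ in an essential way, because flat unimodular solvable metric Lie algebras such as $\mathfrak e(2)=(e^{23},-e^{13},0)$ with the standard metric are non-abelian and satisfy every structural constraint you have listed so far. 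So the argument as written cannot close.

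What is missing is precisely the quantitative core of \cite{Dotti:RicciCurvature}: one must restrict the Einstein equation to the nilpotent ideal (there the nilradical), exploit that $\ad_w$ is nilpotent and hence trace-orthogonal to itself in the right way, and derive from $\Ric=\lambda\Id$ with $\lambda<0$ an inequality on the structure constants of $\g'$ and of the $\mathfrak a$-action that is violated unless everything degenerates to the flat situation. Your proposal correctly identifies where the difficulty lies, but identifying the obstacle is not the same as overcoming it; as submitted, this is a correct collection of preliminary reductions followed by a gap at the decisive step.
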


On the other hand, in the indefinite case there are examples of Einstein metric with $s\neq0$ on non-abelian nilpotent Lie algebras: the first example appeared in~\cite{ContiRossi:EinsteinNilpotent} and a systematic way to construct them was reported in \cite{ContiRossi:EinsteinNice,ContiRossi:RicciFlat}.

The construction of nilpotent Lie algebras admitting Einstein metrics with nonzero scalar curvature is obstructed by an algebraic condition. Let us recall that the space of derivations of a Lie algebra $\g$ is the set
\[\Der(\g):=\{X\colon \g\to \g\st X \text{ is linear and } X[v,w]=[Xv,w]+[v,Xw] \}.\]
Then we proved the following obstruction results:
\begin{theorem}[{\cite[Theorem~4.1]{ContiRossi:EinsteinNilpotent}}]
Let $\g$ be an unimodular Lie algebra with Killing form zero. If $\g$ has an Einstein metric with $s\neq 0$, then $\Der(\g)\subset \Sl(\g)$.
\end{theorem}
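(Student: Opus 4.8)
The statement reduces to a single trace identity. Since $g$ is Einstein with $s\neq0$, the Ricci operator is $\Ric=\lambda\Id$ with $\lambda=s/\dim\g\neq0$; hence for any $D\in\Der(\g)$ it suffices to prove that $\tr(\Ric\circ D)=0$, because this forces $\lambda\tr(D)=0$ and therefore $\tr(D)=0$, i.e.\ $D\in\Sl(\g)$. So the whole proof is about showing $\tr(\Ric\circ D)=0$ for every derivation $D$, using only that $\g$ is unimodular and has zero Killing form; the Einstein hypothesis enters only at the very last line.

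The tool is the Milnor--Besse formula for the Ricci operator of a left-invariant pseudo-Riemannian metric. In general it carries a term $-\tfrac12 B$ built from the Killing form $B$ and a term built from the mean curvature vector $H$ defined by $g(H,X)=\tr\ad(X)$. Under our hypotheses both disappear: $H=0$ because $\g$ is unimodular, and the $B$-term vanishes by assumption. What remains is the purely bracket-quadratic expression: fixing a basis $e_1,\dots,e_n$ of $\g$ with $\eps_i:=g(e_i,e_i)$ and $g(e_i,e_j)=0$ for $i\neq j$,
\[
\ric(X,Y)=-\tfrac12\sum_i\eps_i\, g([X,e_i],[Y,e_i])+\tfrac14\sum_{i,j}\eps_i\eps_j\, g([e_i,e_j],X)\,g([e_i,e_j],Y).
\]
I would then compute $\tr(\Ric\circ D)=\sum_k\eps_k\,\ric(De_k,e_k)$ term by term. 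For the second summand, the completeness relation $\sum_k\eps_k\, g(w,e_k)\,e_k=w$ turns $\sum_k\eps_k\, g([e_i,e_j],De_k)\,g([e_i,e_j],e_k)$ into $g([e_i,e_j],D[e_i,e_j])$, so that part contributes $\tfrac14\sum_{i,j}\eps_i\eps_j\, g([e_i,e_j],D[e_i,e_j])$. For the first summand I would substitute the derivation identity $D[e_k,e_i]=[De_k,e_i]+[e_k,De_i]$ for $[De_k,e_i]$, then relabel $i\leftrightarrow k$ and use skew-symmetry of the bracket; the resulting equation is self-referential, and solving it yields $-\tfrac14\sum_{i,j}\eps_i\eps_j\, g([e_i,e_j],D[e_i,e_j])$. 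The two contributions are exact negatives of one another, hence $\tr(\Ric\circ D)=0$, and the theorem follows as above.

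The only genuine work is the index bookkeeping of that last computation: keeping the signs $\eps_i\eps_j$ straight in indefinite signature and checking that the Milnor--Besse formula is transcribed correctly there. It is a short self-contained manipulation, but it is where any error would hide. I would also record, as a remark, the more conceptual route: for $D\in\Der(\g)$ one has $\exp(tD)\in\operatorname{Aut}(\g)$, so $\exp(tD)^{*}g$ is isometric to $g$ and in particular has constant scalar curvature; differentiating at $t=0$ and using that the first variation of the algebraic scalar curvature of a left-invariant metric equals $-\langle\ric_g,\dot g\rangle$ precisely under the present hypotheses (unimodularity and zero Killing form kill the divergence and mean-curvature corrections) gives $\tr(\Ric\circ D_{\mathrm{sym}})=0$, and $\tr(\Ric\circ D)=\tr(\Ric\circ D_{\mathrm{sym}})$ because the $g$-skew part of $D$ is traceless and pairs to zero against the $g$-symmetric operator $\Ric$.
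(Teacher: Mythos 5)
Your argument is correct: the paper itself only quotes this theorem from \cite{ContiRossi:EinsteinNilpotent} without reproducing a proof, and the proof given there is essentially the one you describe, namely the identity $\tr(\Ric\circ D)=0$ for every $D\in\Der(\g)$, obtained from the Ricci formula for left-invariant pseudo-Riemannian metrics after the unimodularity and vanishing-Killing-form hypotheses remove the $H$- and $B$-terms, combined with $\Ric=\lambda\Id$, $\lambda\neq0$. Your index computation (the self-referential relabelling giving $-\tfrac14\sum_{i,j}\eps_i\eps_j\,g([e_i,e_j],D[e_i,e_j])$, cancelling the second summand) checks out, and your concluding remark is just the moment-map reformulation $\langle\Ric_\mu,E\rangle=\tfrac14\langle\theta(E)\mu,\mu\rangle$ underlying the same identity.
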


Moreover, for low dimensions we have that most of the real Lie algebras does not admit such metrics, because of the
\begin{theorem}[{\cite[Theorem~4.3 and Theorem~4.4]{ContiRossi:EinsteinNilpotent}}]\label{thm:ObstuctionLowDimEinstein}
On a nilpotent Lie algebra of dimension up to six, Einstein metrics are Ricci-flat.

If $\g$ is a nilpotent $7$-dimensional Lie algebra not appearing in
following Table~\ref{Table1}, every Einstein metric on $\g$ is Ricci-flat.
\end{theorem}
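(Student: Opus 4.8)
The strategy is to reduce the statement to a purely algebraic question and then appeal to the classification of low-dimensional nilpotent Lie algebras. A nilpotent Lie algebra $\g$ is unimodular, and by Engel's theorem every $\ad X$ is nilpotent, so $\ad X\ad Y$ is nilpotent and the Killing form vanishes identically; hence the hypotheses of \cite[Theorem~4.1]{ContiRossi:EinsteinNilpotent} are met. Since for an Einstein metric one has $s=n\lambda$, with Ricci-flatness being exactly the case $\lambda=0$, the contrapositive of that theorem says: if $\Der(\g)\not\subseteq\Sl(\g)$, then $\g$ carries no Einstein metric with $s\neq0$, so every Einstein metric on $\g$ is Ricci-flat. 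Therefore it suffices to prove that \emph{every} nilpotent Lie algebra of dimension at most six, and every $7$-dimensional nilpotent Lie algebra not in Table~\ref{Table1}, admits a derivation of nonzero trace.

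For $\dim\g\le 6$ I would argue uniformly: every non-abelian nilpotent Lie algebra in these dimensions admits a positive grading $\g=\bigoplus_{i\ge1}\g_i$ with $[\g_i,\g_j]\subseteq\g_{i+j}$ — in the non-naturally-graded cases one still has such a grading by inspection of the short classification list — and then the associated semisimple derivation $D$ acting as multiplication by $i$ on $\g_i$ satisfies $\tr D=\sum_i i\dim\g_i>0$. Equivalently, one uses that characteristically nilpotent Lie algebras, and more generally nilpotent Lie algebras all of whose derivations are trace-free, do not occur below dimension seven; this is read off the classification (de Graaf, Morozov, etc.). Hence $\Der(\g)\not\subseteq\Sl(\g)$ and the reduction above applies; for the abelian algebra any flat (e.g. definite) metric is Ricci-flat.

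For $\dim\g=7$ the grading argument can genuinely fail, so here the proof becomes a case-by-case verification over the full classification of $7$-dimensional nilpotent Lie algebras (Gong/Magnin and the parametric families). For each algebra one writes the generic derivation as a matrix in a nice basis, solves the linear derivation conditions, and checks whether the resulting space of derivations lies inside $\Sl(\g)$; the algebras for which it does are collected into Table~\ref{Table1}, and for every other $7$-dimensional nilpotent Lie algebra one has a derivation of nonzero trace, so \cite[Theorem~4.1]{ContiRossi:EinsteinNilpotent} again forces any Einstein metric to be Ricci-flat.

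The main obstacle is precisely this dimension-seven bookkeeping. The characteristically nilpotent algebras present no difficulty — all their derivations are nilpotent, hence trace-free, so they must belong to Table~\ref{Table1} automatically — but the delicate part is identifying the remaining, non-characteristically-nilpotent algebras whose derivation algebra nonetheless happens to be contained in $\Sl(\g)$, which requires computing $\Der(\g)$ explicitly rather than merely exhibiting one semisimple derivation; and in the one-parameter families one must isolate the exceptional parameter values at which $\Der(\g)\subseteq\Sl(\g)$ holds. The low-dimensional part, by contrast, is soft, resting only on the existence of positive gradings in dimensions at most six.
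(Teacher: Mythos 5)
Your argument is correct and is essentially the argument behind the cited result: the present paper only quotes Theorems~4.3--4.4 of \cite{ContiRossi:EinsteinNilpotent} without reproducing a proof, and there the statement is obtained exactly as you describe, by combining the obstruction $\Der(\g)\subset\Sl(\g)$ (applicable because nilpotent Lie algebras are unimodular with identically vanishing Killing form) with the existence of a derivation of nonzero trace on every nilpotent Lie algebra of dimension at most six, and with a case-by-case computation of $\Der(\g)$ over Gong's classification in dimension seven, whose exceptional cases form Table~\ref{Table1}. Your identification of the dimension-seven bookkeeping as the real content, and of the low-dimensional part as soft (no characteristically nilpotent or trace-free-derivation algebras below dimension seven), matches the structure of that proof.
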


\begin{table}[thp]
{\small\centering
\begin{tabular}{RL}
\toprule
\textnormal{Name \cite{Gong}} & \g \\
\midrule
123457E & 0,0,e^{12},e^{13},e^{14},e^{23}+e^{15},e^{23}+e^{24}+e^{16} \\
123457H & 0,0,e^{12},e^{13},e^{14}+e^{23},e^{15}+e^{24},e^{25}+e^{23}+e^{16} \\
123457H_1 & 0,0,e^{12},e^{13},e^{14}+e^{23},e^{15}+e^{24},-e^{16}-e^{25}+e^{23} \\
13457I &  0,0,e^{12},e^{13},e^{14},e^{23},e^{25}+e^{26}-e^{34}+e^{15} \\
12457J &  0,0,e^{12},e^{13},e^{23},e^{24}+e^{15},e^{34}+e^{25}+e^{16}+e^{14} \\
12457J_1 &  0,0,e^{12},e^{13},e^{23},e^{24}+e^{15},e^{34}-e^{25}+e^{16}+e^{14} \\
 12457N & 0,0,e^{12},e^{13},e^{23},e^{24}+e^{15}, \lambda e^{25}+e^{26}+e^{34}-e^{35}+e^{16}+e^{14},\ \lambda\in\R \\
 12457N_1 &  0,0,e^{12},e^{13},e^{23},-e^{25}-e^{14},-e^{35}+e^{25}+e^{16} \\
 12457N_2 & 0,0,e^{12},e^{13},e^{23},-e^{14}-e^{25},e^{15}-e^{35}+e^{16}+e^{24}+ \lambda e^{25},\ \lambda\geq0 \\
 123457F & 0,0,e^{12},e^{13},e^{14},e^{15}+e^{23},e^{16}-e^{34}+e^{24}+e^{25} \\
12457G & 0,0,e^{12},e^{13},0,e^{25}+e^{14}+e^{23}, -e^{34}+e^{26}+e^{15} \\
\bottomrule
\end{tabular}
\caption{\label{Table1}$11$ cases with $\Der(\g)\subset\Sl(\g)$, i.e. that might carry an Einstein metric with $s\neq 0$.}
}
\end{table}

We know that Fern\'{a}ndez, Freibert and S\'{a}nchez found in~\cite{FerFreSan20:7dimEinsteinNilpotent} an Einstein metric on the Lie algebra $123457E$. We believe that also $123457H$, $123457H_1$, $12457J$, $12457J_1$ and $12457G$ have an invariant Einstein metric with $s\neq0$ (here the names refer to the Gong's classification given in~\cite{Gong}).

As said before, we were able in~\cite{ContiRossi:EinsteinNilpotent} to construct the first $8$-dimensional example of a pseudo-Riemannian Einstein metric with $s\neq0$ on a nilpotent Lie algebra. For the sake of completeness, we will report that example.

\begin{example}[{\cite[Theorem~5.2]{ContiRossi:EinsteinNilpotent}}]\label{ex:PrimoEinstein8Dim}
Consider the nilpotent Lie algerba $\g$ whose structure equations satisfy the following
\[(0,0,0,0,e^{12}+e^{34},e^{14}-e^{23},e^{16}-e^{24}+e^{35},-e^{13}+e^{26}+e^{45}).\]
Here and throughout the paper, we will describe Lie algebras by giving the action of the Chevalley-Eilenberg operator $\dd$ on the dual (which is equivalent to giving the expression for the Lie bracket). The notation above means that $\g^*$ has a fixed basis $\{e^1,\dotsc, e^4\}$ with $de^1=\dots =d e^4=0$, $de^5=e^{12}+e^{34}=e^1\wedge e^2+e^{3}\wedge e^4$ and so on.

The Lie algebra $\g$ admits two Einstein metric with scalar curvature $s=\frac{56}{15}\neq 0$, with signature $(6,2)$ and $(3,5)$, namely:
\[
e^1\otimes e^1 + e^2\otimes e^2\pm(e^3\otimes e^3+ e^4\otimes e^4)
-\frac73e^5\otimes e^5 \mp\frac73 e^6\otimes e^6
\pm \frac{98}{15}(e^7\otimes e^7 +  e^8\otimes e^8 ).
\]
\end{example}

The Lie algebra of Example~\ref{ex:PrimoEinstein8Dim} is a particular Lie algebra, called nice. A \emph{nice Lie algebra} is a pair $(\g,\mathcal{B})$ where $\g$ is a Lie algebra, $\mathcal{B}=\{e_1,\dots,e_n\}$ is a basis with structure constants $c_{ij}^k$ such that:
\begin{enumerate}
\item for all $i<j$ there exists at most one index $k$ s.t. $c_{ij}^k\neq0$;
\item the condition $c_{ij}^k, c_{lm}^k\neq0$ implies that either $\{i,j\}=\{l,m\}$ or $\{i,j\}\cap\{l,m\}=\emptyset$;
\end{enumerate}
(i.e. both $[e_i,e_j]$ and $e_i\hook\dd e^j$ are multiple of at most one element of the basis $\{e_i\}$ or of the dual basis $\{e^i\}$ up to constants). Two nice nilpotent Lie algebras $(\g,\mathcal{B})$, $(\g',\mathcal{B}')$ are considered \emph{equivalent} if there is a Lie algebra isomorphism $f$ that maps basis elements to multiples of basis elements.

To a nice Lie algebra $(\g,\mathcal{B})$ we can associate:
\begin{enumerate}
\item a directed graph $\Delta$ with arrows labelled by nodes, called its \emph{nice diagram}: the nodes of $\Delta$ are the elements of the nice basis, and $e_i\xrightarrow{e_j}e_k$ is an arrow if $[e_i,e_j]$ is a nonzero multiple of $e_k$;
\item the \emph{root matrix} $M_\Delta$, which has a row for every  nonzero $c_{ij}^k$; the row associated to $c_{ij}^k$ has $+1$ in position $k$, $-1$ in positions $i$ and $j$, and zeroes in the other entries.
\end{enumerate}

\begin{figure}[thp]
\begin{center}
\begin{tikzpicture}[scale=0.65]
\begin{scope}[every node/.style={circle,thick,draw}, minimum size=3em]
    \node (A) at (-6,0) {$1$};
    \node (B) at (-2,0) {$2$};
    \node (C) at (2,0) {$3$};
    \node (D) at (6,0) {$4$};
    \node (E) at (-4,-3) {$5$};
    \node (F) at (4,-3) {$6$};
\end{scope}
\begin{scope}[>={stealth[black]}, every edge/.style={draw,very thick}]
  \path [->] (A) edge              node [above]{$3$} (E);
  \path [->] (A) edge [out=240, in=240, looseness=1] node [above]{$2$} (F);
  \path [->] (B) edge              node [above]{$4$} (E);
  \path [->] (B) edge [left, looseness=1.5, in=90] node [above]{$1$} (F);
  \path [->] (C) edge              node [above]{$1$} (E);
  \path [->] (C) edge              node [above]{$4$} (F);
  \path [->] (D) edge [out=300, in=240, looseness=1.5] node [below]{$2$} (E);
    \path [->] (D) edge              node [above]{$3$} (F);
    \end{scope}
\end{tikzpicture}
\caption{A nice diagram $\Delta$.\label{fig:NiceDiagram}}
\end{center}
\end{figure}
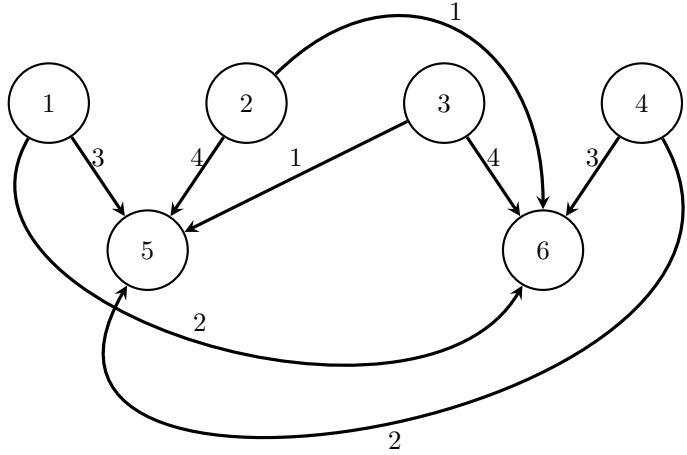

\begin{example}
Consider the $6$-dimensional nice Lie algebra given by \[(0,0,0,0,e^{13}+e^{24},e^{12}+e^{34}),\]
then its nice diagram $\Delta$ is drawn in Figure~\ref{fig:NiceDiagram}, and the corresponding root matrix is
\[M_{\Delta}=
\begin{pmatrix}
-1& 0 & -1& 0 & 1 & 0 \\
0 & -1& 0 & -1& 1 & 0 \\
-1& -1& 0 & 0 & 0 & 1 \\
0 & 0 & -1& -1& 0 & 1
\end{pmatrix}.
\]
\end{example}

We studied intensively the nilpotent nice Lie algebras, and we obtain a classification up to equivalence for dimension $\leq 9$ (\cite{ContiRossi:Construction}). More recently we develop helpful techniques to handle with them in~\cite{ContiDelBarcoRossi:NiceAdInvariant}. Those nice Lie algebras were introduced and studied in \cite{LauretWill:diagonalization,LauretWill:EinsteinSolvmanifolds}, and are an useful tool in the study of nilsoliton and Ricci flow in the Riemannian and pseudo-Riemannian setting (see e.g. \cite{Nikolayevsky,Payne:TheExistence,Lauret:Finding,Lauret:RicciSoliton,Payne:Applications,Will:RankOne,ContiRossi:NiceNilsolitons}), and lately they were used to address the problem of ad-invariant metrics (\cite{ContiDelBarcoRossi:Uniqueness,ContiDelBarcoRossi:NiceAdInvariant}).
We also used nice Lie algebra to construct explicit left-invariant Einstein pseudo-Riemannian metrics on nilpotent Lie group with $s\neq0$ (\cite{ContiRossi:EinsteinNice}) or $s=0$ (\cite{ContiRossi:RicciFlat,ContiDelBarcoRossi:DiagramInvolutions}); even if those examples are difficult to construct, we know that for each $n\geq8$, there exist $n$-dimensional nice nilpotent Lie algebras with an Einstein diagonal metric with $s\neq0$ \cite[Theorem~3.7]{ContiRossi:EinsteinNice}.

However, the latter construction seems quite useless to our goal because of the following results:
\begin{lemma}[{\cite[Lemma~6.3]{FPS:skt6d}}]\label{lem:FPSInvariantPseudoNilpotent}
If $(J, g)$ is an invariant pseudo-K\"{a}hler structure on a nilpotent Lie algebra $\lie{g}$, then the Ricci tensor of $g$ vanishes.
\end{lemma}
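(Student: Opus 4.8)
The plan is to exploit two classical facts about nilpotent metric Lie algebras: first, that on a nilpotent Lie algebra the Ricci operator is necessarily trace-free when the metric is the natural one attached to a symplectic (hence unimodular) situation, and more importantly, second, that for a \emph{pseudo-K\"ahler} metric the Ricci form $\rho$ is a closed $2$-form representing (up to a constant) the first Chern class, and on a nilpotent Lie algebra every such invariant closed form that is also exact must vanish in a suitable sense. Concretely, I would recall that in the K\"ahler (or pseudo-K\"ahler) setting the Ricci tensor is encoded by the Ricci form $\rho(X,Y) = \ric(JX,Y)$, which satisfies $\rho = -\tfrac12\,d(J\circ d\log\det) $ locally, but invariantly on a Lie algebra one has the cleaner statement that $\rho$ is an \emph{exact} invariant $2$-form: $\rho = d\eta$ where $\eta$ is built from the trace of $\ad$ composed with $J$, i.e. $\eta(X) = \tfrac12\operatorname{tr}(J\circ\ad X)$ or a similar Koszul-type $1$-form. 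Since $\g$ is nilpotent, $\operatorname{tr}\ad X = 0$ for all $X$, and one shows the relevant $1$-form $\eta$ is itself zero (or at least that $\rho$, being exact and invariant on a nilpotent Lie algebra, forces $\ric = 0$ by a pairing/symmetry argument).

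The key steps, in order, would be: (i) from the pseudo-K\"ahler hypothesis, write down the standard formula expressing the Ricci form $\rho$ in terms of the complex structure $J$ and the Lie bracket — in the invariant setting this is a purely Lie-algebraic identity, $\rho(X,Y)$ equals a universal expression in $J$, $\operatorname{ad}$, and the metric, and crucially $\rho$ is a $d$-closed $2$-form on $\g$ whose cohomology class is (a multiple of) the first Chern class of $(\g,J)$; (ii) observe that on a nilpotent Lie algebra this class must vanish and in fact $\rho = d\gamma$ for the Koszul $1$-form $\gamma(X) = -\tfrac12\operatorname{tr}(J\operatorname{ad}X)$ (or its para-analogue); (iii) use nilpotency of $\g$ to conclude $\operatorname{tr}(J\operatorname{ad}X)=0$: because $\operatorname{ad}X$ is nilpotent for every $X$, and — here is the crucial point — $J\operatorname{ad}X$, while not nilpotent in general, has the property that its trace vanishes; one argues this by decomposing along the lower central series $\g \supset \g^1 \supset \cdots$, on which $\operatorname{ad}X$ acts by strictly lowering filtration degree, so any $\operatorname{ad}$-expression of the form $\operatorname{tr}(A\,\operatorname{ad}X)$ with $A$ preserving the filtration is zero; (iv) deduce $\gamma \equiv 0$, hence $\rho = 0$, hence $\ric(JX,Y) = 0$ for all $X,Y$, and since $J$ is invertible this gives $\ric = 0$.

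The main obstacle I anticipate is step (iii): it is not true in general that $J$ preserves the lower central series of $\g$ (the complex structure need not be compatible with the nilpotent filtration), so the naive "triangularize $\operatorname{ad}X$ and read off zeros on the diagonal" argument for $\operatorname{tr}(J\operatorname{ad}X)$ does not immediately apply. The correct route is to use that $\rho$ is an \emph{exact} invariant $2$-form together with the fact that on a nilpotent Lie algebra the only invariant function is constant — so the "potential" is rigid — and then invoke the structure of the Chevalley--Eilenberg differential: $\rho = d\gamma$ with $\gamma \in \g^*$, and one computes $d\gamma(X,Y) = -\gamma([X,Y])$, so $\rho$ takes values only in $(\g^1)^*\wedge$(anything); pairing this against the symplectic form $\omega$ and using the pseudo-K\"ahler identity $\ric = $ (Ricci operator) $= J^{-1}\circ(\text{the operator dual to }\rho)$, one gets that $\ric$ has image inside $\g^1$ and kernel containing a complement — then a trace/self-adjointness argument (the Ricci operator is $g$-self-adjoint) combined with $s = \operatorname{tr}\ric = 0$ (which itself follows because $\g$ unimodular forces the scalar curvature of any invariant metric on a nilpotent Lie algebra to be $\le 0$, with equality characterizations) pins down $\ric = 0$. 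I would likely organize the write-up so that the heavy lifting is the identity "$\rho$ is $d$-exact via the Koszul form," cite the standard reference for it in the pseudo-Riemannian case, and then the nilpotent vanishing is a short filtration computation.
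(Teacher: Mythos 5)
The first thing to note is that the paper does not prove this statement at all: it is quoted verbatim from Fino--Parton--Salamon \cite{FPS:skt6d}, so there is no internal proof to compare with, and your attempt has to stand against the argument in that reference. Your skeleton is the right starting point: for an invariant pseudo-K\"ahler metric the Ricci form is the Chevalley--Eilenberg differential of a trace form, $\rho=d\gamma$ with $\gamma(Z)$ built from $\operatorname{tr}(J\ad Z)$ and $\operatorname{tr}(\ad Z)$, the latter term dropping out by unimodularity, so that $\rho(X,Y)$ is a multiple of $\operatorname{tr}(J\ad_{[X,Y]})$. You also correctly diagnose the weak point, namely that the naive filtration argument for $\operatorname{tr}(J\ad_Z)=0$ breaks down because $J$ need not preserve the lower central series. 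The problem is that the patch you then propose does not close this gap.

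Concretely: exactness of $\rho$ only tells you that $\rho(X,Y)$ depends on $[X,Y]$. This yields $\ric(\cdot,\mathfrak{z})=0$ for the centre $\mathfrak{z}$, and, after wedging with $\omega^{n-1}$ and using that $d$ vanishes in top degree on a unimodular Lie algebra, it yields $s=0$; for a nilpotent algebra with one-dimensional centre this is nowhere near $\ric=0$, and your claim that $\ric$ has ``image inside $\g^1$ and kernel containing a complement'' does not follow from anything you wrote. Worse, the auxiliary fact you lean on --- that invariant metrics on nilpotent Lie algebras have $s\le 0$ --- is a Riemannian statement (Milnor, Dotti Miatello) that fails for indefinite metrics: Example~\ref{ex:PrimoEinstein8Dim} of this very paper exhibits an invariant metric on a nilpotent Lie algebra with $s=\frac{56}{15}>0$, so it cannot be invoked here. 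To finish one must genuinely prove the vanishing of $\operatorname{tr}(J\ad_Z)$ for $Z\in[\g,\g]$, which uses the integrability of $J$ and the closedness of $\omega$ and not just nilpotency; alternatively one can argue as in the literature on pseudo-K\"ahler nilmanifolds via Salamon's theorem that an invariant complex structure on a nilpotent Lie algebra admits a closed invariant $(n,0)$-form $\Psi$: since $\nabla J=0$, the Levi-Civita connection preserves the canonical line, $\Psi$ has constant nonzero norm and is holomorphic, hence parallel, hence the canonical bundle is flat, i.e.\ $\rho=0$ and therefore $\ric=0$ because $J$ is invertible. As it stands, your write-up records the correct first step and an honest list of difficulties, but not a proof.
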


\begin{proposition}[{\cite[Corollary~8.2, Proposition~6.4]{ContiRossi:RicciTensor}}]\label{pr:ContiRossiInvariantParaK}
Invariant para-K\"{a}hler structures $(K,g)$ on nilpotent Lie algebras are Ricci-flat.

A left-invariant para-K\"ahler structure on a unimodular Lie group has $s=0$.
\end{proposition}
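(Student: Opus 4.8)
The plan is to deduce both statements from a single formula for the Ricci form. Integrability of $K$ gives a splitting $\g=\g_+\oplus\g_-$ into the $(\pm1)$-eigenspaces of $K$; these are Lagrangian subalgebras of $(\g,\omega)$ and totally isotropic for $g$, so $\dim\g_\pm=n$ and $g$ pairs $\g_+$ with $\g_-$. From $\nabla K=0$ one gets that the Levi-Civita connection $\nabla$ preserves this splitting and that each $R(X,Y)$ commutes with $K$; a short computation with the curvature symmetries and the para-Hermitian identity $g(KX,KY)=-g(X,Y)$ then yields $\Ric(KX,KY)=-\Ric(X,Y)$. In particular $\Ric$ vanishes on $\g_+\times\g_+$ and on $\g_-\times\g_-$, so it is determined by the $2$-form $\rho(X,Y):=\Ric(KX,Y)$, which is skew-symmetric by the same identity. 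Writing $\rho=\omega(\psi\,\cdot\,,\,\cdot\,)$ with $\psi:=K\circ\ric\circ K$, one checks that $\psi$ is $\omega$-self-adjoint and $\tr\psi=\tr\ric=s$, whence $\rho\wedge\omega^{n-1}=\tfrac{s}{n}\,\omega^n$.

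The heart of the argument is the claim that $\rho$ is $\dd$-exact with a \emph{left-invariant} primitive: $\rho=\dd\gamma$, where $\gamma\in\g^*$ is given by $\gamma(X)=-\tr\big(\pi_-\circ\ad_X|_{\g_-}\big)$ for $X\in\g_+$ and $\gamma(X)=\tr\big(\pi_+\circ\ad_X|_{\g_+}\big)$ for $X\in\g_-$ (up to an overall sign this is the para-analogue of the classical expression for the Ricci form of a K\"ahler Lie algebra). I would prove it as follows. Since $\nabla$ preserves the distribution $\g_+$, it induces a connection on the real line bundle $\Lambda^n\g_+\to G$, which is left-invariant, hence trivialized by the left-invariant section $e_1^+\wedge\dots\wedge e_n^+$; the induced connection is left-invariant with connection $1$-form $X\mapsto\tr(\nabla_X|_{\g_+})$, so its curvature is the $\dd$ of this $1$-form. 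Exactly as in the K\"ahler case, where the Ricci form is the curvature of the canonical bundle, this curvature equals $\pm\rho$. It then remains to compute $\tr(\nabla_X|_{\g_+})$ from the Koszul formula: using $\dd\omega=0$ and that $\g_\pm$ are Lagrangian subalgebras (concretely, the identity $g\big(\sum_i[e_i^+,f_i^-],Z\big)=\tr\ad_Z$ for $Z\in\g_+$, with $\{e_i^+\}\subset\g_+$, $\{f_i^-\}\subset\g_-$ $g$-dual bases) one finds that $\tr(\nabla_X|_{\g_+})$ equals the stated trace of a block of $\ad_X$. I expect this last step — the Koszul bookkeeping together with keeping the signs straight under the convention $g(K\cdot,K\cdot)=-g(\cdot,\cdot)$ — to be the main technical obstacle.

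Granting the formula, the first statement is immediate (it parallels Lemma~\ref{lem:FPSInvariantPseudoNilpotent} in the pseudo-K\"ahler case). If $\g$ is nilpotent then $\g_+$ and $\g_-$ are nilpotent subalgebras, and for $X\in\g_+$ the operator $\ad_X$ is block upper-triangular with respect to $\g=\g_+\oplus\g_-$ because it preserves the subalgebra $\g_+$; its lower block $\pi_-\circ\ad_X|_{\g_-}$ is therefore a diagonal block of a nilpotent operator, hence nilpotent, hence traceless, so $\gamma(X)=0$. Symmetrically $\gamma(X)=0$ for $X\in\g_-$. Thus $\gamma=0$, so $\rho=\dd\gamma=0$, and since $\rho$ determines $\Ric$ we conclude $\Ric=0$.

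For the second statement I would use that a Lie algebra $\g$ of dimension $2n$ is unimodular if and only if $\dd$ annihilates $\Lambda^{2n-1}\g^*$: for a nonzero $\nu\in\Lambda^{2n}\g^*$ and any $X\in\g$ one has $\dd(X\hook\nu)=\mathcal{L}_X\nu=-(\tr\ad_X)\,\nu$, and the elements $X\hook\nu$ span $\Lambda^{2n-1}\g^*$. Hence, using $\dd\omega=0$,
\[
\tfrac{s}{n}\,\omega^n \;=\; \rho\wedge\omega^{n-1} \;=\; \dd\gamma\wedge\omega^{n-1} \;=\; \dd\big(\gamma\wedge\omega^{n-1}\big) \;=\; 0,
\]
and since $\omega^n\neq0$ this forces $s=0$. (The nilpotent case of the first statement already falls under this, as nilpotent Lie algebras are unimodular; but there one gets the stronger conclusion $\Ric=0$.)
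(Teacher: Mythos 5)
Your argument is correct, but note that the paper does not actually prove this proposition: it is imported verbatim from \cite[Corollary~8.2, Proposition~6.4]{ContiRossi:RicciTensor}, where it is obtained by specializing a general formula for the Ricci tensor of almost para-Hermitian manifolds (expressed through the intrinsic torsion) to the integrable, parallel, left-invariant case. Your route is a self-contained and more elementary alternative: the para-K\"ahler analogue of the Koszul/Hano argument that the Ricci form of a K\"ahler Lie group is $\dd$ of an explicit trace form. I checked the pivotal computations and they hold: $\Ric(K\cdot,K\cdot)=-\Ric$ follows from $[R(X,Y),K]=0$ together with $g(K\cdot,K\cdot)=-g$ and the pair symmetry of $R$; the identification of $\rho(X,Y)=\Ric(KX,Y)$ with $\tr\bigl(R(X,Y)|_{\g_+}\bigr)=\tfrac12\tr\bigl(K\circ R(X,Y)\bigr)$ requires the first Bianchi identity (worth stating explicitly, as it is the only place curvature symmetries beyond $K$-invariance enter); and the Koszul bookkeeping does give $\gamma(X)=-\tr\bigl(\pi_-\circ\ad_X|_{\g_-}\bigr)$ for $X\in\g_+$ and $\gamma(X)=\tr\bigl(\pi_+\circ\ad_X|_{\g_+}\bigr)$ for $X\in\g_-$, using $\dd\omega=0$ exactly as you indicate (your parenthetical identity $g\bigl(\sum_i[e_i^+,f_i^-],Z\bigr)=\tr\ad_Z$ for $Z\in\g_+$ is the block-triangularity of $\ad_Z$ in disguise). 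The two conclusions then follow as you say: nilpotency makes the diagonal blocks of $\ad_X$ nilpotent, hence $\gamma=0$ and $\Ric=0$; unimodularity makes $\dd$ vanish on $(2n-1)$-forms, killing $\dd(\gamma\wedge\omega^{n-1})=\rho\wedge\omega^{n-1}$, which is a nonzero universal multiple of $s\,\omega^n$ (your constant $s/n$ is off by a factor of $2$, but this is immaterial). What your approach buys is a uniform, purely Lie-algebraic proof of both statements from one exact formula for $\rho$, closely parallel to Lemma~\ref{lem:FPSInvariantPseudoNilpotent}; what the cited reference's approach buys is generality, since its curvature formulas apply to non-integrable and non-parallel almost para-Hermitian structures as well.
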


So, even if it is possible to build Einstein non-Ricci-flat metrics on nilpotent Lie algebras, it is impossible to merge this condition with further special structures, such as pseudo-K\"{a}hler or para-K\"ahler.

This force us to look for another category out of nilpotent condition to obtain special Einstein metrics. Reasonable targets are solvable Lie algebras, and we show below that such an example exists.

\begin{example}\label{ex:AffPK}
Consider the solvable Lie algebra $\lie{aff}(\R)$ of dimension $2$ defined by $(e^{12},0)$ and
consider the tensors:
\begin{gather*}
g_+= e^1\otimes e^1+ e^2\otimes e^2,\qquad g_-= -e^1\otimes e^1+ e^2\otimes e^2,\qquad\omega=e^{12},\\
J(e_1)=e_2,\quad J(e_2)=-e_1,\qquad K(e_1)=e_2,\quad K(e_2)=e_1.
\end{gather*}
A straightforward computation shows that $K^2=\id$, $J^2=-\id$, $N_K=N_J=0$, $\dd\omega=0$, $g_+(JX,Y)=\omega(X,Y)=g_-(KX,Y)$ and $\Ric_{g_+}=\Ric_{g_-}=-\Id$. So $(g_+,J,\omega)$ is an Einstein pseudo-K\"ahler metric, while $(g_-,K,\omega)$ is an Einstein para-K\"ahler metric, and both metrics have nonzero scalar curvature.
\end{example}

\section{Pseudo-Riemannian Nilsolitons and Einstein Solvmanifolds}\label{sec:PseudNilsolitonSummary}

By the results mentioned in the previous section, to have Einstein special metrics with nonzero scalar curvature we should look to a bigger class than the nilpotent one, namely the solvable Lie algebras. The first step is to study how to construct pseudo-Riemannian Einstein metrics on solvable Lie algebra.

In the indefinite case, there are new phenomenon and new instances that do not appear in the positive definite case: there are Einstein metrics on nilpotent Lie algebras with nonzero scalar curvature; there are example of Einstein metrics on solvable Lie algebra whose restriction to the nilradical is a degenerate bilinear form; there are abundance of Ricci-flat metrics, both on nilpotent and solvable non-nilpotent Lie algebra. In the end, we observe that Einstein pseudoriemannian metrics may apper either on unimodular or nonunimodular Lie algebras.

These differences and the wide spectrum of examples that can be constructed make a complete classification of Einstein metrics in the pseudo-Riemannian case very difficult. However, in a previous paper together with D.~Conti, we study the Einstein solvable Lie algebras and their relations with pseudo-Riemannian nilsoliton metrics~(\cite{ContiRossi:IndefiniteNilsolitons}).
In this section we recall some important facts and we will show how the study of nilsoliton metrics help the construction of Einstein metrics with nonzero scalar curvature. The interested reader may refer to \cite{ContiRossi:IndefiniteNilsolitons,ContiRossi:NiceNilsolitons}.

Our aim was to find some suitable categories $\mathcal{S}$ and $\mathcal{N}$, where
\begin{center}
$\mathcal{S}\subset\{$  Einstein pseudo-Riemannian solvmanifolds $\}$\\
$\mathcal{N}\subset\{$  pseudo-Riemannian nilsolitons $\}$
\end{center}
such that we can have a bijective relation:
\begin{center}
\begin{tabular}{ccc}
$\mathcal{S}\ni$ Einstein Solvmanifold $(\tilde{\g},\tilde{g})$  & $\longleftrightarrow$ &  Nilsoliton metric $(\g,g)$ $\in \mathcal{N}$.
\end{tabular}
\end{center}

This relation in the Riemannian setting is rigid and robust, by the work of many authors (see e.g. the surveys~\cite{Lauret:SurveyNilsoltions,Jablonski:Survey} and the reference therein), for example Lauret (\cite{Lauret:Einstein_solvmanifolds}) proved that Einstein solvmanifolds are standard, i.e. they decompose as an orthogonal direct sum
\[\tilde{\g}= \g \oplus^\perp \lie{a},\]
where $\g$ is a nilpotent ideal and $\lie{a}$ is an abelian subalgebra. Previous results of Heber (\cite{Heber:noncompact}) showed that $\lie{a}$ acts by normal derivations on $\g$ and we know that those examples are nonunimodular (\cite{Dotti:RicciCurvature}). Unfortunately, as reported before, we proved that most of the properties of the Riemannian case simply does not hold for the more general pseudo-Riemannian setting: nevertheless, we were able to obtain a cute relation that include the Riemannian one as a very special case.

The starting point to construct our yearned relation is the following:
\begin{definition}
A \emph{standard decomposition} of a metric Lie algebra $\tilde{\lie{g}}$ is a decomposition $\tilde{\lie{g}}=\lie{g}\oplus^\perp\lie{a}$, where $\lie{g}$ is a nilpotent ideal and $\lie{a}$ is an abelian subalgebra.
\end{definition}
Note that $\g$ is any nilpotent ideal of $\tilde{\g}$ which is settled between the derived Lie algebra $[\tilde{\lie{g}},\tilde{\lie{g}}]=\tilde{\lie{g}}'$ and the nilradical $\lie{n}$ of $\tilde{g}$, i.e. $\tilde{\lie{g}}'\subset\lie{g}\subset\lie{n}$. Note also that the standard Riemannian solvmanifolds are also standard for the pseudo-Riemannian definition.

In fact, in \cite[Section~1]{ContiRossi:IndefiniteNilsolitons}, we show that imposing any further restriction on the ideal $\g$ may lead to some problem on the nondegenerate of the metric: we found examples of Einstein solvable Lie algebra $\tilde{\g}$ where $(\tilde{\lie{g}}')^{\perp}$ is degenerate, and we found examples of Einstein solvable Lie algebra where $(\lie{n})^{\perp}$ is degenerate (so chosing $\g$ equals to the nilradical or to the derived Lie algebra necessarily excludes interesting cases).

On the other hand, this definition of standard decomposition is the minimum to get some interesting relations, since we need the metric to be nondegenerate on some nilpotent ideal. Note that some examples of Einstein solvable Lie algebra do not fit in this definition: in particular we are  excluding cases where $\lie{g}$ has degenerate metric and cases with $\lie{g}^\perp$ not abelian. Examples of either those instance may occur.

On a solvable Lie algebra with a standard decomposition, we define the vector $H$ to be the metric dual of $X\mapsto\Tr \ad X$, i.e. $\tilde{g}(H,X)=\Tr\ad X$, $X\in\tilde{\lie{g}}$.

Denoting by $E^*$ the adjoint of $E$ with respect to the metric, we introduce the following definition, that will play an important role in the sequel.
\begin{definition}
A standard decomposition $\tilde{\lie{g}}=\lie{g}\oplus^\perp\lie{a}$ is \emph{pseudo-Iwasawa} if $\ad X= (\ad X)^*$ for any $X\in\lie{a}$.
\end{definition}

Pseudo-Iwasawa condition seems a strong restriction, but we proved that other Einstein solvable Lie algebras are isometric to a pseudo-Iwasawa one (\cite[Proposition~1.19]{ContiRossi:IndefiniteNilsolitons}). Note also that a Riemannian Einstein solvable Lie algebra is always (pseudo-)Iwasawa (see~\cite{Heber:noncompact}). On the other hand, it is possible to have Einstein solvable pseudo-Riemannian Lie algebra where $(\ad X)^*$ is not a derivations, or examples of Einstein solvable Lie algebra such that $[\ad X,(\ad X)^*]\neq0$.

The second ingredient to obtain a relation is given by the notion of nilsoliton: the pair $(\lie{g},g)$ is a \emph{(Algebraic) nilsoliton} if
$\lie{g}$ nilpotent Lie algebra, $g$ a pseudo-Riemannian metric and the Ricci tensor satisfies
\begin{equation}\label{eq:nilsoliton}
\Ric_g=\lambda\id+D,\quad \lambda\in\R,\ D\in \Der\lie{g}.
\end{equation}

Thus Einstein metrics are particular solutions of~\eqref{eq:nilsoliton} with $D=0$. We remark that all the algebraic pseudo-Riemannian nilsolitons are Ricci soliton (\cite{Onda:ExampleAgebraicRicciSolitons}), but there exist Ricci solitons which are not algebraic~(\cite{BatatOnda:AlgebraicRicciSoliton}).

Recall that a derivation $N$ on a Lie algebra $\g$ is called a \emph{Nikolayevsky} (or \emph{pre-Einstein}) \emph{derivation} if it is diagonalizable and satisfies $\Tr(NX)=\Tr X$ for any $X\in\Der\g$. Note that nice nilpotent Lie algebra have Nikolayevsky derivation diagonal with respect to the nice basis.

We gave the following characterization of the nilsoliton metrics.

\begin{theorem}[{\cite[Theorem~2.1]{ContiRossi:IndefiniteNilsolitons}}]
\label{thm:nilsolitonsandnik}
Let $g$ be a nilsoliton metric on a nilpotent Lie algebra $\g$. Then either
\begin{enumerate}
\item $\lambda=0$ and $D$ is a nilpotent derivation in the null space of $\Der\g$; or
\item $\lambda\neq0$ and setting $\tilde D=-\frac1\lambda D$, we have
\[\Tr(X)=\Tr(\tilde D\circ X),  \quad X\in\Der\g;\]
relative to the Jordan decomposition $\tilde D=\tilde D_s+\tilde D_n$, $\tilde D_s$ is a Nikolayevsky derivation and $\tilde D_n$ a nilpotent derivation in the null space of $\Der\g$.
\end{enumerate}
In either case, the eigenvalues of $D$ are rational.
\end{theorem}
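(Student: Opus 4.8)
The plan is to run the classical Lauret--Nikolayevsky argument and check that it survives the passage to indefinite metrics. The engine is the identity that, on any nilpotent metric Lie algebra $(\g,g)$, the Ricci operator is trace-orthogonal to every derivation:
\[
\Tr(\Ric_g\circ\phi)=0\qquad\text{for all }\phi\in\Der\g .
\]
I would prove this in one of two equivalent ways. Concretely: write $\ric$ in a pseudo-orthonormal basis via the structure constants---legitimate because on a nilpotent Lie algebra both the Killing form and $X\mapsto\Tr\ad X$ vanish---and check that $\sum_i\eps_i\,\ric(\phi e_i,e_i)$ collapses to zero after substituting the derivation identity $\phi[e_i,e_j]=[\phi e_i,e_j]+[e_i,\phi e_j]$; this is the coordinate shadow of the moment-map formula $\Ric_g=m(\mu)$ for the bracket tensor $\mu$, whose trace pairing $\Tr(m(\mu)\circ\phi)$ is a fixed multiple of $\langle\mu,\phi\cdot\mu\rangle$ and hence vanishes exactly when $\phi\in\Der\g$. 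More conceptually: $e^{t\phi}\in\mathrm{Aut}(\g)$ makes $(\g,(e^{t\phi})^*g)$ isometric to $(\g,g)$, so $t\mapsto\Tr\Ric_{(e^{t\phi})^*g}$ is constant, and differentiating at $t=0$ with the first variation of scalar curvature---whose divergence terms vanish because nilpotent groups are unimodular, so left-invariant vector fields are divergence-free---gives $\Tr(\Ric_g\circ\phi_{\mathrm{sym}})=0$, which equals $\Tr(\Ric_g\circ\phi)$ since $\Ric_g$ is $g$-self-adjoint and the metric adjoint preserves trace.

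Next I would feed the nilsoliton equation $\Ric_g=\lambda\,\id+D$, $D\in\Der\g$, into this identity: for all $\phi\in\Der\g$,
\[
0=\lambda\Tr\phi+\Tr(D\circ\phi).
\]
If $\lambda=0$, this says $D=\Ric_g$ is a derivation lying in the radical of the trace form on $\Der\g$ (the ``null space of $\Der\g$''), and I would conclude $D$ is nilpotent from the fact that this radical is a nilpotent ideal of the algebraic Lie algebra $\Der\g$, hence consists of nilpotent operators. If $\lambda\neq0$, put $\tilde D:=-\tfrac1\lambda D=\id-\tfrac1\lambda\Ric_g\in\Der\g$; the identity rearranges to
\[
\Tr(\tilde D\circ\phi)=\Tr\phi\qquad\text{for all }\phi\in\Der\g ,
\]
i.e.\ $\tilde D$ is a pre-Einstein derivation, except that it need not be semisimple a priori. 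Since $\Der\g$ is algebraic the Jordan parts $\tilde D_s,\tilde D_n$ again lie in $\Der\g$; one then shows $\tilde D_n$ lies in the trace-form radical---equivalently $\Tr(\tilde D_s\circ\phi)=\Tr\phi$ for all $\phi\in\Der\g$, using that the radical is an ideal absorbing nilpotent Jordan parts---so that $\tilde D_s$ is a genuine semisimple pre-Einstein derivation. By Nikolayevsky's uniqueness theorem it is then $\mathrm{Aut}(\g)$-conjugate to the Nikolayevsky derivation $N$ of $\g$; in particular $\tilde D_s$ is diagonalizable over $\R$ with rational eigenvalues, and $\tilde D_n=\tilde D-\tilde D_s$ is a nilpotent derivation in the null space of $\Der\g$. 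The rationality of the eigenvalues of $D$ then follows, being those of $N$ up to the factor $-\lambda$ (and all $0$ in the case $\lambda=0$).

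All the Lie-algebraic input---existence, uniqueness up to automorphism, semisimplicity and rational spectrum of the Nikolayevsky derivation, and the structure of the trace-form radical of $\Der\g$---depends only on the abstract Lie algebra $\g$, not on $g$, so Nikolayevsky's proofs transfer verbatim to the pseudo-Riemannian setting. Accordingly, I expect the only genuine obstacle to be precisely that imported structural fact: that the radical of the trace form on $\Der\g$ is a nilpotent ideal, and that the nilpotent Jordan part of a (possibly non-semisimple) pre-Einstein derivation falls inside it. A self-contained treatment of this means redoing Nikolayevsky's weight-space analysis of $\g$ and of $\Der\g$ relative to $\tilde D_s$, the reduction of the trace condition to the zero-weight subalgebra, and the rationality extracted from the resulting linear system with integer coefficients. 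Everything else---the trace identity and the case split---is short, provided one pauses to record the pseudo-Riemannian nuisances used above: $\Ric_g$ is $g$-self-adjoint, the semisimple part of a $g$-self-adjoint derivation is again a $g$-self-adjoint derivation, and unimodularity genuinely annihilates the divergence terms in the variation of the scalar curvature.
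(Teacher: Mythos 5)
The paper states this theorem as a quoted result from \cite{ContiRossi:IndefiniteNilsolitons} and gives no proof of it, so there is no in-paper argument to compare against line by line; your proposal follows the route that reference (and Lauret--Nikolayevsky before it, in the Riemannian case) actually takes: the identity $\Tr(\Ric_g\circ\phi)=0$ for all $\phi\in\Der\g$, valid for indefinite metrics because the moment-map formula for the Ricci operator of a nilpotent metric Lie algebra is purely algebraic, followed by the case split on $\lambda$ and Nikolayevsky's structure theory of the trace form on the algebraic Lie algebra $\Der\g$. The architecture is right and you correctly locate the load-bearing inputs.

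Two steps, however, are not closed as written. First, in the $\lambda=0$ case you infer that $D$ is a nilpotent operator because the radical of the trace form ``is a nilpotent ideal of the algebraic Lie algebra $\Der\g$, hence consists of nilpotent operators.'' That inference is invalid: a nilpotent, even abelian, ideal can consist of semisimple operators (the centre of $\mathfrak{gl}(V)$, for instance). The fact you actually need is Nikolayevsky's structural lemma: for an algebraic subalgebra $\mathfrak{h}\subset\mathfrak{gl}(V)$ with decomposition $\mathfrak{h}=\mathfrak{s}\oplus\mathfrak{t}\oplus\mathfrak{n}$ ($\mathfrak{s}$ semisimple, $\mathfrak{t}$ a torus, $\mathfrak{n}$ the nilpotent elements of the radical of $\mathfrak{h}$), the radical of $\Tr(XY)|_{\mathfrak{h}}$ is exactly $\mathfrak{n}$. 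Second, and more seriously, the assertion that $\tilde D_n$ lies in that radical --- equivalently that $\tilde D_s$ still satisfies the pre-Einstein trace condition --- is the real content of case (2), and ``the radical absorbs nilpotent Jordan parts'' does not establish it: a nilpotent element of $\Der\g$ need not lie in the radical (nilpotent elements of the Levi factor $\mathfrak{s}$ do not). The missing argument is roughly: the solution set of $\Tr(X\phi)=\Tr\phi$ in $\Der\g$ is the coset $N+\mathfrak{n}$ with $N\in\mathfrak{t}$ a genuine semisimple pre-Einstein derivation, so $\tilde D$ lies in the solvable algebraic subalgebra $\mathfrak{t}\oplus\mathfrak{n}$, whose nilpotent elements are exactly $\mathfrak{n}$ (triangularize; elements of $\mathfrak{n}$ have zero diagonal, so the eigenvalues of $N+R$ are those of $N$); since $\tilde D_n$ is a nilpotent element of this subalgebra, $\tilde D_n\in\mathfrak{n}$ and $\tilde D_s$ is pre-Einstein. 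You flag this as the ``only genuine obstacle'' and defer it, which is honest, but the proof as proposed does not contain it. (A smaller point: since $D=-\lambda\tilde D$, rationality of the eigenvalues of $\tilde D$ gives those of $D$ only up to the factor $-\lambda$, so the final rationality claim needs the same reading as in the source, i.e.\ rationality up to a common scale.)
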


From this theorem we have that equation~\eqref{eq:nilsoliton} generates $4$ different type of nilsolitons in the pseudo-Riemannian case, that we can summarize as follow:
\begin{enumerate}[label={$(\mathrm{Nil\arabic*})$}]
\item\label{cond:nil1} $\lambda=0$, $D=0$. This is the Ricci-flat case, examples of which exist in abundance in the pseudo-Riemannian setting (see e.g. \cite{ContiDelBarcoRossi:DiagramInvolutions,ContiRossi:RicciFlat}).
\item\label{cond:nil2} $\lambda=0$, $D\neq0$. In this case, by Theorem~\ref{thm:nilsolitonsandnik} $D$ is nilpotent (nonsemisimple) and it is not a Nikolayevsky derivation.
\item\label{cond:nil3} $\lambda\neq0$, $D=0$. This is the Einstein case, with nonzero scalar curvature, that we discussed in Section~\ref{section:NilpotentEinstein}. This case has no Riemannian counterpart by Milnor~\cite{Milnor:curvatures}.
\item\label{cond:nil4} $\lambda\neq0$, $D\neq0$. This is the classical Riemannian situation. Note that if $D$ is diagonalizable, it is a multiple of a Nikolayevsky derivation. However, examples of indefinite nilsoliton with nonsemisimple derivation $D$ may appear.
\end{enumerate}

In our paper, we gave examples of nilsoliton metrics for each type (as well as the first example of type~\ref{cond:nil4} with nonsemisimple derivation), but more examples can be found in literature: for example, there are known examples of~\ref{cond:nil4} on the Heisenberg Lie algebra where the derivation $D$ is semisimple~\cite{Onda:ExampleAgebraicRicciSolitons,KondoTamaru:LorentzianNilpotent}. It is worth to mention (see \cite[Remark~2.6]{ContiRossi:IndefiniteNilsolitons}) that pseudo-Riemannian nilsolitons are not unique, unlike the Riemannian ones, as proved in~\cite{Lauret:RicciSoliton}.

On the other hand, we demonstrated that an Einstein pseudo-Iwasawa solvable Lie algebra induce a nilsoliton equation on the nilpotent ideal $\g$.
\begin{theorem}[{\cite[Theorem~3.9]{ContiRossi:IndefiniteNilsolitons}}]
\label{thm:solvablenilsolitoncorrespondence}
Let $\tilde\g=\g\oplus^{\perp}\lie{a}$ be a pseudo-Iwasawa decomposition. Then the Einstein equation $\Ric_{\tilde{g}} = \lambda \id$ on $(\tilde\g,\tilde{g})$ is satisfied if and only if
\begin{enumerate}
\item $\g$ with the induced metric satisfies the nilsoliton equation
\[\Ric = \lambda \id+D, \qquad D=\ad H.\]
\item $\langle \ad X, \ad Y\rangle_{\Tr}=-\lambda\tilde{g}(X,Y)$ for all $X,Y\in\lie a$.
\end{enumerate}
In this case, then
\[\Tr D^2 = -\lambda\Tr D.\]
\end{theorem}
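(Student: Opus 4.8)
The plan is to compute $\Ric_{\tilde g}$ directly from the bracket and read it off block by block with respect to the orthogonal splitting $\tilde\g=\g\oplus^\perp\lie a$. Write $\langle\cdot,\cdot\rangle:=\tilde g$, fix a pseudo-orthonormal basis $\{e_i\}$ of $\g$ and $\{f_\alpha\}$ of $\lie a$ with $\eps_i=\langle e_i,e_i\rangle=\pm1$, $\eps_\alpha=\langle f_\alpha,f_\alpha\rangle=\pm1$, and start from the standard expression for the Ricci operator of a metric Lie algebra in its pseudo-Riemannian form,
\[
\Ric_{\tilde g}=M-\tfrac12 B-S(\ad H),
\]
where $\langle BX,Y\rangle=\Tr(\ad X\circ\ad Y)$ is the Killing operator, $H$ is the mean curvature vector ($\langle H,X\rangle=\Tr\ad X$), $S(\ad H)=\tfrac12\bigl(\ad H+(\ad H)^{*}\bigr)$, and $M$ is the symmetric operator assembled from the two quadratic bracket terms $-\tfrac12\sum_{i,j}\eps_i\eps_j\langle[X,e_i],e_j\rangle\langle[Y,e_i],e_j\rangle$ and $+\tfrac14\sum_{i,j}\eps_i\eps_j\langle[e_i,e_j],X\rangle\langle[e_i,e_j],Y\rangle$. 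Throughout I would use four structural facts: $\g$ is an ideal, so $[\tilde\g,\tilde\g]\subset\g$ and each $A_\alpha:=\ad f_\alpha|_\g$ is a derivation of $\g$; $\lie a$ is abelian; $\Tr\ad X=0$ for $X\in\g$ (because $\ad X$ is nilpotent on the ideal $\g$ and maps $\lie a$ into $\g$), hence $H\in\lie a$ and $\ad H$ is block diagonal, vanishing on $\lie a$; and the pseudo-Iwasawa hypothesis is equivalent to every $A_\alpha$ being self-adjoint for the induced metric on $\g$.

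For the $\lie a\times\lie a$ block the only brackets that enter are the $[\lie a,\g]\subset\g$, so the second quadratic term of $M$ drops out, and using self-adjointness of the $A_\alpha$ the first quadratic term collapses to $-\tfrac12\Tr(A_\alpha A_\beta)=-\tfrac12 B(f_\alpha,f_\beta)$; since $S(\ad H)$ kills $\lie a$ this gives $\Ric_{\tilde g}|_{\lie a}=-B|_{\lie a}$, so $\Ric_{\tilde g}|_{\lie a}=\lambda\,\id$ is exactly $\langle\ad X,\ad Y\rangle_{\Tr}=-\lambda\,\tilde g(X,Y)$ for $X,Y\in\lie a$, i.e.\ condition~(2). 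For the mixed block $\g\times\lie a$ the same bracket analysis, again with self-adjointness of the $A_\alpha$, reduces every surviving contribution of $M$ and of $-\tfrac12 B$ to a multiple of $\Tr_\g(\ad_\g e_i\circ A_\alpha)$; the decisive point is that this trace vanishes, because on a nilpotent $\g$ the operator $\ad_\g e_i$ sends each term $\g^{k}$ of the lower central series into $\g^{k+1}$, which the derivation $A_\alpha$ preserves, so $A_\alpha\circ\ad_\g e_i$ again maps $\g^{k}$ into $\g^{k+1}$ and its trace --- the sum of the traces of the endomorphisms it induces on the quotients $\g^{k}/\g^{k+1}$ --- is zero. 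Thus the mixed block of $\Ric_{\tilde g}$ vanishes identically and imposes no further condition.

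The $\g\times\g$ block is the heart of the argument. Splitting the sums in $M$ into the part over $\{e_i\}$ and the part involving $\{f_\alpha\}$, the $\{e_i\}$-part reproduces the operator $M_\g$ of $(\g,g|_\g)$, while the brackets $[e_i,f_\alpha]=-A_\alpha e_i$ contribute $-\tfrac12\sum_\alpha\eps_\alpha A_\alpha^{2}$ from the first quadratic term and $+\tfrac12\sum_\alpha\eps_\alpha A_\alpha^{2}$ from the second; these cancel, so $M|_\g=M_\g=\Ric_g$ since $\g$ is nilpotent (its Killing operator and mean curvature vanish). Moreover $B|_\g=0$ by nilpotency of $\g$, and $S(\ad H)|_\g=\ad H|_\g$ because $H\in\lie a$ makes $\ad H$ self-adjoint. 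Hence $\Ric_{\tilde g}|_\g=\Ric_g-\ad H$, so $\Ric_{\tilde g}|_\g=\lambda\,\id$ is precisely the nilsoliton equation $\Ric_g=\lambda\,\id+D$ with $D=\ad H\in\Der\g$, i.e.\ condition~(1). Combining the three blocks proves the equivalence.

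Granting the Einstein condition, the trace identity is then immediate: $\ad H$ vanishes on $\lie a$ and restricts to $D$ on $\g$, so $\Tr(\ad H)^{2}=\Tr D^{2}$, while $\tilde g(H,H)=\Tr\ad H=\Tr D$ by the definition of $H$; feeding $X=Y=H\in\lie a$ into condition~(2) yields $\Tr D^{2}=\langle\ad H,\ad H\rangle_{\Tr}=-\lambda\,\tilde g(H,H)=-\lambda\Tr D$. I expect the main obstacle to be the pseudo-Riemannian bookkeeping in the $\g\times\g$ block --- tracking all the sign factors $\eps_i,\eps_\alpha$ and verifying that the two $\sum_\alpha\eps_\alpha A_\alpha^{2}$ contributions genuinely cancel rather than add --- together with the elementary but essential lemma that $\Tr_\g(\ad_\g x\circ A)=0$ for any derivation $A$ of a nilpotent Lie algebra, which is exactly what makes the mixed block disappear with no extra hypothesis.
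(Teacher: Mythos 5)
Your argument is correct and complete: the block decomposition of the standard pseudo-Riemannian Ricci formula with respect to $\tilde\g=\g\oplus^\perp\lie a$, the cancellation of the two $\tfrac12\sum_\alpha\eps_\alpha A_\alpha^2$ contributions on the $\g\times\g$ block, and the vanishing of the mixed block via $\Tr_\g(\ad_\g x\circ A)=0$ for a derivation $A$ of a nilpotent algebra are exactly the points on which the result turns, and you use the pseudo-Iwasawa (self-adjointness) hypothesis in precisely the places where it is needed. Note that the present paper only quotes this theorem from \cite[Theorem~3.9]{ContiRossi:IndefiniteNilsolitons} without reproducing its proof; your derivation follows the same curvature-formula/block-computation route as that source, so there is nothing to flag beyond the sign bookkeeping you already identified.
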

Where $\langle X,Y\rangle_{\Tr}$ denotes the inner product induced by the trace, i.e. $\langle X,Y\rangle_{\Tr}=\Tr (X\circ Y)$.

The relation we were looking for is given by the following Corollary~\ref{cor:Hzero} and Theorem~\ref{thm:constructionstandardextension}.
\begin{corollary}[{\cite[Corollary~3.13]{ContiRossi:IndefiniteNilsolitons}}]
\label{cor:Hzero}
Given a pseudo-Iwasawa solvable Lie algebra $\tilde\g=\g\rtimes\lie a$ satisfying $\Ric_{\tilde{g}}=\lambda \id$ for some nonzero $\lambda$, then either:
\begin{enumerate}
\item  $\tilde\g$ is unimodular, $H=0$ and $(\g,g)$ is a nilsoliton of type~\ref{cond:nil3}, with $\Ric_g=\lambda \id$; or
\item $\tilde\g$ is not unimodular, $\tilde{g}(H,H)\neq0$, $\g\oplus\Span{H}$ is also Einstein with a pseudo-Iwasawa decomposition, and $(\g,g)$ is a nilsoliton of type~\ref{cond:nil4}, with $\Ric_g=\lambda\id+D$ and $\Tr D\neq0$.
\end{enumerate}
\end{corollary}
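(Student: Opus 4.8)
The plan is to derive this from Theorem~\ref{thm:solvablenilsolitoncorrespondence}, the defining relation $\tilde g(H,X)=\Tr\ad X$ for the mean curvature vector, and Theorem~\ref{thm:nilsolitonsandnik}. The dichotomy in the statement is simply ``$\tilde\g$ unimodular or not'', which is equivalent to $H=0$ or $H\ne0$: unimodularity says the functional $X\mapsto\Tr\ad X$ vanishes, and since $\tilde g$ is nondegenerate this happens exactly when its metric dual $H$ is zero. The starting point is the bookkeeping identity $\tilde g(H,H)=\Tr\ad H=\Tr D$. Here one first observes $H\in\lie a$: for $v\in\g$ the operator $\ad v$ sends $\tilde\g$ into the ideal $\g$ and restricts there to a nilpotent map, so $\tilde g(H,v)=\Tr\ad v=0$, i.e.\ $H\perp\g=\lie a^{\perp}$. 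Then $\ad H$ preserves $\g$ and annihilates $\lie a$ (abelian, with $H\in\lie a$), so in block form it is $D$ on $\g$ and $0$ on $\lie a$, whence $\Tr\ad H=\Tr D$. If $\tilde\g$ is unimodular we are done at once: $H=0$, so $D=\ad H=0$, and Theorem~\ref{thm:solvablenilsolitoncorrespondence}(1) gives $\Ric_g=\lambda\id$ with $\lambda\ne0$, i.e.\ a type-\ref{cond:nil3} nilsoliton.

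The content is the non-unimodular case, and the step I expect to be the real obstacle is proving $\tilde g(H,H)\ne0$, equivalently $\Tr D\ne0$. I would argue by contraposition: assume $\Tr D=0$ and show $\tilde\g$ is unimodular. By Theorem~\ref{thm:solvablenilsolitoncorrespondence}(1), $(\g,g)$ satisfies $\Ric_g=\lambda\id+D$ with $\lambda\ne0$, so case (2) of Theorem~\ref{thm:nilsolitonsandnik} applies: setting $\tilde D=-\tfrac1\lambda D$ with Jordan decomposition $\tilde D=\tilde D_s+\tilde D_n$, the semisimple part $\tilde D_s$ is a Nikolayevsky derivation, $\tilde D_n$ is nilpotent, and $\Tr X=\Tr(\tilde D\circ X)$ for all $X\in\Der\g$. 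Then $\Tr\tilde D_s=\Tr\tilde D-\Tr\tilde D_n=-\tfrac1\lambda\Tr D-0=0$; since a Nikolayevsky derivation is diagonalizable with real eigenvalues and, taking $X=\tilde D_s$ in its defining property, satisfies $\Tr(\tilde D_s^2)=\Tr\tilde D_s=0$, this forces $\tilde D_s=0$. Hence $\tilde D=\tilde D_n$ lies in the null space of $\Der\g$, so $\Tr X=\Tr(\tilde D\circ X)=0$ for every $X\in\Der\g$, i.e.\ $\Der\g\subset\Sl(\g)$. Consequently every operator $\ad Y$, $Y\in\tilde\g$, is traceless: $\ad v$ for $v\in\g$ is nilpotent on $\tilde\g$ as above, and for $X\in\lie a$ the operator $\ad X$ vanishes on $\lie a$ and equals $\ad X|_{\g}\in\Der\g\subset\Sl(\g)$ on $\g$. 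So $\tilde\g$ is unimodular, proving the contrapositive.

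Granting $\tilde g(H,H)=\Tr D\ne0$, the remaining assertions are quick. First $D\ne0$, so $(\g,g)$ is a type-\ref{cond:nil4} nilsoliton with $\Tr D\ne0$. For the last claim, $H\perp\g$ together with $\tilde g(H,H)\ne0$ makes $\g\oplus\Span{H}$ a nondegenerate subalgebra, so $\g\oplus^{\perp}\Span{H}$ is a standard decomposition; it is pseudo-Iwasawa because $\ad H$ is self-adjoint on $\tilde\g$ (the original decomposition being pseudo-Iwasawa, $H\in\lie a$) and $\g\oplus\Span{H}$ is a nondegenerate $\ad H$-invariant subspace, so the restricted operator---acting as $D$ on $\g$ and $0$ on $\Span{H}$---is self-adjoint for the induced metric. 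Finally, that this solvable Lie algebra is Einstein with the same $\lambda$ follows by applying Theorem~\ref{thm:solvablenilsolitoncorrespondence} in the converse direction: a short trace computation identifies its mean curvature vector again with $H$, condition (1) there is the nilsoliton equation already in hand, and condition (2), $\langle\ad H,\ad H\rangle_{\Tr}=-\lambda\tilde g(H,H)$, is exactly the identity $\Tr D^2=-\lambda\Tr D$ furnished by Theorem~\ref{thm:solvablenilsolitoncorrespondence}. Everything outside the contraposition argument is routine manipulation of traces and the block structure of $\ad H$.
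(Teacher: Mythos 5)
This corollary is quoted in the paper from \cite[Corollary~3.13]{ContiRossi:IndefiniteNilsolitons} without a proof, so there is nothing in the present text to compare your argument against line by line; judged on its own, your reconstruction is correct and uses exactly the ingredients the paper makes available. The identifications $H\in\lie a$, $\tilde g(H,H)=\Tr\ad H=\Tr D$, the contrapositive via Theorem~\ref{thm:nilsolitonsandnik} (where $\Tr\tilde D_s=0$ together with $\Tr(\tilde D_s^2)=\Tr\tilde D_s$ and the real eigenvalues forces $\tilde D_s=0$, hence $\Der\g\subset\Sl(\g)$ and unimodularity), and the converse application of Theorem~\ref{thm:solvablenilsolitoncorrespondence} to $\g\oplus^\perp\Span{H}$ using $\Tr D^2=-\lambda\Tr D$ are all sound, and this is surely the intended route given that the statement is presented as a corollary of Theorem~\ref{thm:solvablenilsolitoncorrespondence}.
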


In particular, we will use the following theorem to obtain Einstein solvable Lie algebra from nilsoliton metrics (compare with \cite[Theorem~4.7]{Yan:Pseudo-RiemannianEinsteinhomogeneous}).

\begin{theorem}[{\cite[Theorem~4.1]{ContiRossi:IndefiniteNilsolitons}}]\label{thm:constructionstandardextension}
Let $(\g,g)$ be a nilsoliton, $\Ric_g=\lambda \id+D$, $\lambda\neq0$. Let $\lie a\subset\Der\g$ be a subalgebra of self-adjoint derivations containing $D$, and assume that $\langle,\rangle_{\Tr}$ is nondegenerate on ${\lie a}$. Then
the metric $g-\frac1{\lambda}\langle,\rangle_{\Tr}$ on $\tilde \g=\g\rtimes\lie a$ is Einstein with $\widetilde\Ric=\lambda \id$ and the decomposition $\tilde\g=\g\oplus^\perp\lie{a}$ is pseudo-Iwasawa.
\end{theorem}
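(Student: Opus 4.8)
The plan is to verify the three assertions of Theorem~\ref{thm:constructionstandardextension} directly by unwinding the definitions: first that $\tilde g := g - \frac1\lambda\langle\,,\,\rangle_{\Tr}$ is a well-defined nondegenerate metric on $\tilde\g = \g\rtimes\lie a$; second that the decomposition $\tilde\g = \g\oplus^\perp\lie a$ is pseudo-Iwasawa with respect to it; and third that $\widetilde\Ric = \lambda\id$. For the first point, note that $g$ is nondegenerate on $\g$ by hypothesis, $\langle\,,\,\rangle_{\Tr}$ is nondegenerate on $\lie a$ by hypothesis, and the two summands are declared orthogonal, so $\tilde g$ is nondegenerate on the direct sum; here one must be slightly careful that the formula makes sense, i.e. that $\langle\,,\,\rangle_{\Tr}$ restricted to $\lie a$ really is the relevant bilinear form and that $g$ is extended by zero on $\lie a$ and on the mixed terms. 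The pseudo-Iwasawa claim is almost immediate: for $X\in\lie a$ we have $\ad X|_\g \in \Der\g$ by assumption, $\ad X$ annihilates $\lie a$ (since $\lie a$ is abelian and $\g$ is an ideal, $[\lie a,\lie a]=0$ and $[X,\g]\subset\g$), and the derivations in $\lie a$ are self-adjoint for $g$; one then checks that self-adjointness for $g$ on $\g$ together with orthogonality of $\g$ and $\lie a$ upgrades to self-adjointness of $\ad X$ for $\tilde g$ on all of $\tilde\g$.

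The substantive part is the Ricci computation, and here I would lean on Theorem~\ref{thm:solvablenilsolitoncorrespondence}, which is an if-and-only-if statement: to conclude $\widetilde\Ric = \lambda\id$ it suffices to check that its two numbered conditions hold for the metric $\tilde g$ we have built. Condition~(1) is that $(\g,g)$ satisfies $\Ric = \lambda\id + D$ with $D = \ad H$; we are given the nilsoliton equation $\Ric_g = \lambda\id + D$, so what must be verified is that the vector $H$ associated to $\tilde g$ (the metric dual of $X\mapsto\Tr\ad X$) satisfies $\ad H = D$. This is where the specific choice of the correction term $-\frac1\lambda\langle\,,\,\rangle_{\Tr}$ enters: for $X\in\lie a$ one computes $\Tr\ad X = \Tr(\ad X|_\g)$, and since $D\in\lie a$, the trace form pairing $\langle D, X\rangle_{\Tr} = \Tr(D\circ X)$ should, after multiplication by $-\frac1\lambda$ and using the relation $\Tr D^2 = -\lambda\Tr D$ and self-adjointness, reproduce exactly $\tilde g(H, X)$ with $H$ forced to act as $D$. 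Condition~(2) of Theorem~\ref{thm:solvablenilsolitoncorrespondence}, namely $\langle\ad X,\ad Y\rangle_{\Tr} = -\lambda\,\tilde g(X,Y)$ for $X,Y\in\lie a$, is then essentially the definition of $\tilde g$ on $\lie a$ — it is $-\lambda$ times $(-\frac1\lambda\langle\,,\,\rangle_{\Tr})$ — so it holds by construction.

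The main obstacle I anticipate is the bookkeeping around $H$ and the interplay of the three traces: one must track carefully that $\Tr\ad X$ computed in $\tilde\g$ equals $\Tr(\ad X|_\g)$ (the $\lie a$-block of $\ad X$ is zero), that $H$ lands in $\lie a$ (it must, since $\ad X|_\g$ is traceless for $X\in\g$ when $\g$ is nilpotent, forcing $H\perp\g$, hence $H\in\lie a$ as $\g^\perp=\lie a$), and finally that the defining equation $\tilde g(H,X) = \Tr(D\circ X)$ for all $X\in\lie a$, given that $\tilde g|_{\lie a} = -\frac1\lambda\langle\,,\,\rangle_{\Tr}$, pins down $H$ as the unique element with $-\frac1\lambda\langle H, X\rangle_{\Tr} = \langle D, X\rangle_{\Tr}$, i.e. $H = -\lambda D$ in the $\langle\,,\,\rangle_{\Tr}$-pairing — at which point one must reconcile this with the assertion $D = \ad H$, meaning $\ad(-\lambda D) = D$ as operators, which is false literally and signals that the identification is happening at the level of the $\lie a$-action and uses the nilsoliton normalization $\Tr D^2 = -\lambda\Tr D$ rather than a naive equality. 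Getting this identification stated correctly is the crux; once it is in place, invoking Theorem~\ref{thm:solvablenilsolitoncorrespondence} closes the argument, and the final displayed identity $\Tr D^2 = -\lambda\Tr D$ follows from its concluding line.
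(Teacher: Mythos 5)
First, a remark on scope: the paper quotes Theorem~\ref{thm:constructionstandardextension} from \cite{ContiRossi:IndefiniteNilsolitons} without reproducing a proof, so there is no in-paper argument to compare against; your route --- build $\tilde g$, check the pseudo-Iwasawa property, and then feed the two conditions of the ``if and only if'' in Theorem~\ref{thm:solvablenilsolitoncorrespondence} --- is nonetheless the natural one and matches the structure of the cited source. The nondegeneracy of $\tilde g$, the automatic abelianity of $\lie a$ (the bracket of two self-adjoint operators is anti-self-adjoint, so a subalgebra of self-adjoint derivations is abelian), the verification that $\ad X$ is $\tilde g$-self-adjoint on all of $\tilde\g$, and the observation that condition~(2) of Theorem~\ref{thm:solvablenilsolitoncorrespondence} holds by the very definition of $\tilde g\vert_{\lie a}$ are all handled correctly.

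The gap is exactly where you flag it: the identification $\ad H=D$. You have the defining equation of $H$ wrong --- it is $\tilde g(H,X)=\Tr\ad X=\Tr(X\vert_\g)$ for $X\in\lie a$, not $\tilde g(H,X)=\Tr(D\circ X)$ --- and this is what produces the spurious conclusion $H=-\lambda D$ and the ensuing confusion. The ingredient you are missing is Theorem~\ref{thm:nilsolitonsandnik}(2): since $\lambda\neq0$, the nilsoliton equation forces $\Tr(X)=\Tr(\tilde D\circ X)$ for \emph{every} $X\in\Der\g$, where $\tilde D=-\frac1\lambda D$, i.e. $\Tr(D\circ X)=-\lambda\Tr(X)$ for all derivations $X$. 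Applying this with $X$ ranging over $\lie a$ gives $\tilde g(D,X)=-\frac1\lambda\Tr(D\circ X)=\Tr(X)=\Tr\ad X$, so $D$ (as an element of $\lie a\subset\tilde\g$) satisfies the defining property of $H$; nondegeneracy of $\langle\,,\rangle_{\Tr}$ on $\lie a$ then yields $H=D$ and hence $\ad H\vert_\g=D$, which is condition~(1). The identity $\Tr D^2=-\lambda\Tr D$ that you try to use instead is only the single instance $X=D$ of this family of identities; it cannot pin down $H$ inside $\lie a$ once $\dim\lie a>1$, so as written the argument does not close. With Theorem~\ref{thm:nilsolitonsandnik}(2) inserted at this point, your reduction to Theorem~\ref{thm:solvablenilsolitoncorrespondence} goes through.
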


We will call the solvable Lie algebra constructed in Theorem~\ref{thm:constructionstandardextension} a \emph{pseudo-Iwasawa} extension of the nilsoliton $\g$ and the dimension of $\lie{a}$ will be called the \emph{rank} of the extension. The problem to construct Einstein metrics on Lie algebra can be translate in computing nilsoliton metrics. However, in general this problem may be as difficult as to solve the Einstein equation~\eqref{eq:Einstein}.

Luckily, it turns out that on nice nilpotent Lie algebras it is easy to perform computations and to find diagonal nilsoliton metrics, as we will see from the next result, but before stating it, we need to fix some notation. For a vector $X$ we denote with $X^D$ the diagonal matrix with diagonal matrix with $X$ in the diagonal;
$[1]$ denotes a vector with all entries equal to $1$; $e^{M_\Delta}(g)$ is a vector depending on the root matrix: explicitly if the $h$-th row of $M_\Delta$ is $\tran(-e_i-e_j+e_k)$, the $h$-th component of
$e^{M_\Delta}(g)$ is $\frac{g_k}{g_ig_j}$ (where $g_i$ are the component of the diagonal metric); and $c$ is the vector of the structure constants $c_{ij}^k$ written using the same order as in the root matrix.

\begin{theorem}[{\cite[Theorem~1.5]{ContiRossi:NiceNilsolitons}}]
\label{thm:Xequalsb}
Let $\g$ be a nice nilpotent Lie algebra and let $b$ be a solution to $M_\Delta\tran{M_\Delta} b =[1]$. Given a diagonal pseudo-Riemannian metric $g$, the following are equivalent:
\begin{enumerate}
\item $g$ is a diagonal nilsoliton with $\Ric=\lambda \id+D$;
\item $\Ric = \lambda(\id-N)$, where $N$ is the diagonal Nikolayevsky derivation;
\item $X\in -2\lambda b + \ker\tran{M_\Delta}$, where $X^D= e^{M_\Delta}(g)(c^D)^2.$
\end{enumerate}
\end{theorem}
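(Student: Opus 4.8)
The plan is to reduce everything to linear algebra with the root matrix, once a closed form for the Ricci operator is available. First I would prove that for a diagonal metric $g=\sum_i g_i\,e^i\otimes e^i$ on a nice nilpotent Lie algebra $(\g,\mathcal B)$ the operator $\Ric_g$ is again diagonal in $\mathcal B$, with
\[
\Ric_g=\frac12\bigl(\tran{M_\Delta}X\bigr)^{D},\qquad X^{D}=e^{M_\Delta}(g)\,(c^{D})^{2}.
\]
This comes from the classical Ricci formula for a nilpotent metric Lie algebra, whose diagonal reads $\ric(e_a,e_a)=-\frac12\sum_i\lvert[e_a,e_i]\rvert^2+\frac14\sum_{i,j}g([e_i,e_j],e_a)^2$ (the term $\Tr(\ad e_a)^2$ vanishes by nilpotency); conditions (1)--(2) in the definition of a nice Lie algebra force all mixed terms to cancel, so that after division by $g_a$ each nonzero $c_{ij}^k$ contributes exactly $-\tfrac12\tfrac{g_k}{g_ig_j}(c_{ij}^k)^2$ in positions $i,j$ and $+\tfrac12\tfrac{g_k}{g_ig_j}(c_{ij}^k)^2$ in position $k$, i.e.\ the row $\tran(-e_i-e_j+e_k)$ of $M_\Delta$ weighted by the corresponding component of $X$. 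I expect this to be the one genuinely delicate step: in the indefinite setting the metric adjoints carry signs $\operatorname{sign}(g_i)$, and the core of the argument is checking that these cancel, so that the formula is signature-independent (agreeing with the known Riemannian formula).

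Next I would record two linear-algebra facts about nice Lie algebras. A diagonal endomorphism $D=\operatorname{diag}(d_1,\dots,d_n)$ is a derivation if and only if $d_i+d_j=d_k$ whenever $c_{ij}^k\neq0$, i.e.\ if and only if $d\in\ker M_\Delta$. And the diagonal Nikolayevsky derivation $N$ has diagonal $n=[1]+\tran{M_\Delta}\,b$: since the rows of $M_\Delta$ have entry-sum $-1$ we get $M_\Delta[1]=-[1]$, hence $M_\Delta n=M_\Delta[1]+M_\Delta\tran{M_\Delta}\,b=-[1]+[1]=0$, so $N$ is a diagonal derivation, while $n-[1]=\tran{M_\Delta}\,b\in(\ker M_\Delta)^{\perp}$; together with the fact that on a nice Lie algebra the identity $\Tr(NX)=\Tr X$ for $X\in\Der\g$ is detected already on diagonal derivations, this pins $n$ down uniquely.

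Finally I would assemble the equivalences, using that $\ker(M_\Delta\tran{M_\Delta})=\ker\tran{M_\Delta}$ (as $M_\Delta$ is a real matrix). Assume (1); then $\Ric_g$ is diagonal by the first step, so $D=\Ric_g-\lambda\id$ is a diagonal derivation, hence its diagonal vector $r-\lambda[1]$, with $r=\tfrac12\tran{M_\Delta}X$, lies in $\ker M_\Delta$; applying $M_\Delta$ and using $M_\Delta[1]=-[1]$ and $M_\Delta\tran{M_\Delta}b=[1]$ turns this into $M_\Delta\tran{M_\Delta}(X+2\lambda b)=0$, i.e.\ $X\in-2\lambda b+\ker\tran{M_\Delta}$, which is (3); every step reverses, so (1)$\Leftrightarrow$(3). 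Conversely, from (3) write $X=-2\lambda b+v$ with $v\in\ker\tran{M_\Delta}$, so $\tran{M_\Delta}X=-2\lambda\,\tran{M_\Delta}b$ and $\Ric_g=-\lambda(\tran{M_\Delta}b)^{D}=\lambda(\id-N)$ by the previous step, which is (2); and (2)$\Rightarrow$(1) is immediate since $-\lambda N\in\Der\g$ and $g$ is diagonal. The case $\lambda=0$ is degenerate: all three conditions then reduce to $\Ric_g=0$, and for (1) one invokes Theorem~\ref{thm:nilsolitonsandnik} to conclude that the diagonal derivation $D$ is nilpotent, hence zero.
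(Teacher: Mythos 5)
This statement is imported verbatim from \cite[Theorem~1.5]{ContiRossi:NiceNilsolitons}; the present paper states it without proof, so there is no in-paper argument to compare yours against. Judged on its own terms, your proposal is correct and follows the natural route (which is also that of the cited source): everything hinges on the closed formula $\Ric_g=\tfrac12(\tran{M_\Delta}X)^D$ for a diagonal metric on a nice algebra, after which the three conditions become linear algebra in $\R^n$ relative to the decomposition $\ker M_\Delta\oplus\operatorname{im}\tran{M_\Delta}$. Your identification of that formula as the one delicate step is right: the schematic Ricci expression you write omits the $1/g_i$ normalizations, but the conclusion you extract from it --- each nonzero $c_{ij}^k$ contributing $\mp\tfrac12\tfrac{g_k}{g_ig_j}(c_{ij}^k)^2$ in positions $i,j$ and $k$, with the signature signs absorbed into the quotient $g_k/(g_ig_j)$ --- is the correct one, and it is consistent with the explicit nilsoliton on the Heisenberg algebra in Example~\ref{ex:HeisenbergPseudoExtensionPK}. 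Two small points worth making explicit if you write this up: (i) the claim that $\Tr(NX)=\Tr X$ need only be tested on diagonal derivations requires the observation that the diagonal of \emph{any} derivation of a nice Lie algebra lies in $\ker M_\Delta$ (this follows from the second niceness condition by comparing $e_k$-components in $X[e_i,e_j]=[Xe_i,e_j]+[e_i,Xe_j]$); (ii) the solvability of $M_\Delta\tran{M_\Delta}b=[1]$ is automatic since $M_\Delta[1]=-[1]$ puts $[1]$ in $\operatorname{im}M_\Delta=\operatorname{im}(M_\Delta\tran{M_\Delta})$. Also, your appeal to Theorem~\ref{thm:nilsolitonsandnik} for $\lambda=0$ is unnecessary: your own implication $(1)\Rightarrow(3)$ already forces the Ricci diagonal into $\ker M_\Delta\cap\operatorname{im}\tran{M_\Delta}=\{0\}$ in that case.
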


The last condition of the theorem above can be disassembled to linear problems and a polynomial one, and we applied this theorem obtaining the signature of all possible diagonal nilsoliton pseudo-Riemannian metric in low dimensions (see \cite[Theorem~2.1]{ContiRossi:NiceNilsolitons}).

The diagonal nilsoliton metric on nice Lie algebra force the derivation $D$ apperaing in~\eqref{eq:nilsoliton} to be diagonalizable, hence to be the diagonal Nikolayevsky derivation. Applying Theorem~\ref{thm:constructionstandardextension} we obtain the following
\begin{proposition}[{\cite[Proposition~1.20]{ContiRossi:NiceNilsolitons}}]
\label{prop:NicePseudoIwasawa}
Let $\g$ be a nice nilpotent Lie algebra with a diagonal metric $g$ of type~\ref{cond:nil4}. Then $\Ric_g=\lambda \id-\lambda N$, where $N$ is the (nonzero) diagonal Nikolayevsky derivation, and the semidirect product $\tilde\g=\g\rtimes_N\Span{e_0}$ is a nice solvable Lie algebra with an Einstein diagonal pseudo-Iwasawa metric
\[g-\frac{\Tr N}{\lambda}e^0\otimes e^0.\]
\end{proposition}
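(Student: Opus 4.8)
The plan is to derive Proposition~\ref{prop:NicePseudoIwasawa} as a direct specialization of Theorem~\ref{thm:constructionstandardextension}, once we know that the relevant derivation in the nilsoliton equation is (a multiple of) the diagonal Nikolayevsky derivation $N$. First I would invoke the definition of a type-\ref{cond:nil4} metric: by hypothesis $g$ is a diagonal nilsoliton with $\lambda\neq0$ and $D\neq0$. Since $g$ is diagonal on a nice Lie algebra, the derivation $D$ appearing in $\Ric_g=\lambda\id+D$ is diagonal with respect to the nice basis (this is exactly the remark preceding the Proposition, together with the equivalence in Theorem~\ref{thm:Xequalsb} between items (1) and (2)), hence in particular diagonalizable. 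By the type-\ref{cond:nil4} analysis recalled after Theorem~\ref{thm:nilsolitonsandnik}, a diagonalizable $D$ must be a multiple of the Nikolayevsky derivation; comparing with Theorem~\ref{thm:Xequalsb}(2) pins down the constant, giving $D=-\lambda N$ and therefore $\Ric_g=\lambda\id-\lambda N$. Since $D\neq0$ and $\lambda\neq0$ we get $N\neq0$, which justifies the parenthetical ``nonzero''.

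Next I would set $\lie a=\Span{e_0}$ and let $e_0$ act on $\g$ by the derivation $N$, so that $\tilde\g=\g\rtimes_N\Span{e_0}$. To apply Theorem~\ref{thm:constructionstandardextension} I need $\lie a$ to be a subalgebra of self-adjoint derivations containing $D$, with $\langle\,,\rangle_{\Tr}$ nondegenerate on $\lie a$. It is one-dimensional hence abelian, and it contains $D=-\lambda N$; self-adjointness of $N$ is automatic once we declare the extended metric to make $e_0$ orthogonal to $\g$ with $\tilde g(e_0,e_0)$ chosen below — indeed a diagonal derivation of a nice Lie algebra is symmetric for any diagonal metric on $\g$. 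Nondegeneracy of $\langle\,,\rangle_{\Tr}$ on $\lie a$ amounts to $\Tr(N^2)\neq0$; by the identity $\Tr D^2=-\lambda\Tr D$ from Theorem~\ref{thm:solvablenilsolitoncorrespondence} (equivalently, the last line of Theorem~\ref{thm:nilsolitonsandnik}'s consequences) applied to $D=-\lambda N$ this reads $\lambda^2\Tr(N^2)=\lambda^2\Tr N$, so $\Tr(N^2)=\Tr N$, and $\Tr N\neq0$ because $N$ is a nonzero Nikolayevsky derivation with rational, not all-zero, eigenvalues and positive trace (or: if $\Tr N=0$ then $\Tr N^2=0$ forces $N=0$ since $N$ is real diagonalizable). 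Hence $\langle e_0,e_0\rangle_{\Tr}=\Tr(N^2)\neq0$ and the hypotheses of Theorem~\ref{thm:constructionstandardextension} are met.

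Theorem~\ref{thm:constructionstandardextension} then yields directly that $\tilde g=g-\tfrac1\lambda\langle\,,\rangle_{\Tr}$ on $\tilde\g$ is Einstein with $\widetilde\Ric=\lambda\id$ and that $\tilde\g=\g\oplus^\perp\lie a$ is pseudo-Iwasawa. Translating the trace form on the one-dimensional $\lie a$ back to coordinates, $\langle\,,\rangle_{\Tr}$ restricted to $\lie a$ is $\Tr(N^2)\,e^0\otimes e^0=\Tr N\,e^0\otimes e^0$, so the extended metric is $g-\tfrac{\Tr N}{\lambda}e^0\otimes e^0$, exactly as stated. Finally I would observe that $\tilde\g$ is nice: adding a node $e_0$ whose bracket with each $e_i$ is $N_i e_i$ (a multiple of a single basis vector) preserves both nice-basis axioms, since each $[e_0,e_i]$ hits only $e_i$ and the new brackets $\{0,i\}$ overlap the old ones only in the trivial way. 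I do not anticipate a serious obstacle here — the statement is essentially a corollary — but the one point requiring a little care is the clean identification $D=-\lambda N$ and the verification that the extension datum $(\lie a,N)$ satisfies the nondegeneracy hypothesis of Theorem~\ref{thm:constructionstandardextension}, i.e. that $\Tr N\neq0$; everything else is bookkeeping with the definitions of nice Lie algebra and of the trace form.
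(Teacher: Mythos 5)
Your proposal is correct and follows exactly the route the paper intends: the remark before the proposition (via Theorem~\ref{thm:Xequalsb}) identifies $D=-\lambda N$ with $N$ the diagonal Nikolayevsky derivation, and the extension is then the rank-one case of Theorem~\ref{thm:constructionstandardextension} with $\lie a=\Span{N}$, the nondegeneracy hypothesis reducing to $\Tr N^2=\Tr N\neq0$ as you argue. The remaining checks (self-adjointness of a diagonal derivation for a diagonal metric, niceness of the extension) are the routine bookkeeping you indicate.
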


\begin{remark}\label{rem:RankOneNiceExtension}
Since a nice nilpotent Lie algebra often has many nilsoliton metrics with different signatures, we see that in the case of diagonal nice nilsoliton the rank-one pseudo-Iwasawa extension is the same Lie algebra for all the diagonal metrics, and it is isomorphic to the rank-one extension obtained using the Nikolayevsky derivation. In practice, once one knows that a nice nilpotent Lie algebra admits a \ref{cond:nil4}-type diagonal nilsoliton metric (e.g. by Theorem~\ref{thm:Xequalsb}), one may conclude that $\g\rtimes_N\Span{e_0}$ is Einstein with nonzero scalar curvature.

This rank-one extension obtained using the Nikolayevsky derivation is the standard extension used in the Riemannian setting.
\end{remark}

\begin{remark}
In the Riemannian setting, the clear relation between nilsoliton metrics and Einstein solvmanifolds suggest a strategy to build Einstein (almost)-K\"ahler metrics on solvable Lie algebra: first one looks for Riemannian nilsoliton metrics and then extends the Lie algebra to a solvable Riemannian Einstein. After that one searches for an invariant symplectic $2$-form $\omega$, and finally one checks if the endomorphism $J$ defined by $\omega(X,Y)=g(JX,Y)$ is a complex integrable stucture. This strategy was pursued efficiently in~\cite{Manero:phdThesis,FerFinMan:G2EinSolv}.

The similar relation between nilsoliton and Einstein metrics that we proved in the pseudo-Riemannian setting is the key point to extend this strategy to the indefinite case: our strategy, exposed in Section~\ref{sec:strategy}, includes the Riemannian one as a special case.
\end{remark}

\begin{remark}
We finally remark that our correspondence between Einstein solvmanifolds and nilsoliton metrics do not cover the whole space of pseudo-Riemannian Einstein solvmanifolds: there are examples that are not pseudo-Iwasawa extension or that are not related to nilsolitons. Nevertheless, the theory and technique developed up to now are very effective in construct such examples.
\end{remark}

\section{A Strategy and New Examples}\label{sec:strategy}

We remark that to build Einstein metrics with $s\neq0$, it is necessary to look for solvable Lie algebras that are not nilpotent, and a practical way to build them is through nilsoliton metrics. Using the results exposed in the previous section, we can write the following recipe to construct examples of Einstein non-Ricci-flat para-K\"ahler and pseudo-K\"ahler metrics on solvable non-nilpotent Lie algebras:
\begin{enumerate}[label={$\widehat{\mathrm{\arabic*}}$}]
\item\label{strategy:1} Find a nilsoliton metric $(\g,g)$ of type~\ref{cond:nil4}.
\item\label{strategy:2} Extend it to a Einstein pseudo-Riemannian solvable Lie algebra $(\tilde{\g},\tilde{g})$ using Theorem~\ref{thm:constructionstandardextension}, i.e. consider a pseudo-Iwasawa extension. Necessarily this extension is Einstein with nonzero scalar curvature.
\item\label{strategy:3} Look for symplectic $2$-form $\omega$, such that $\nabla \omega=0$.
\item\label{strategy:4} \textbf{(a)} Look for a integrable complex structure $J$ such that $g(J\cdot,\cdot)=\omega(\cdot,\cdot)$ and $\nabla J=0$. We got a \emph{pseudo-K\"ahler Einstein} solvable Lie algebra with nonzero scalar curvature.\\
Or\\
\textbf{(b)} Look for a integrable para-complex structure $K$ such that $g(K\cdot,\cdot)=\omega(\cdot,\cdot)$ and $\nabla K=0$. We got a \emph{para-K\"ahler Einstein} solvable Lie algebra with nonzero scalar curvature.
\end{enumerate}

In our strategy, examples of \ref{cond:nil4}-type nilsolitons required in point~\ref{strategy:1} can be constructed using nice algebras and diagonal nilsoliton metrics, i.e. by using Theorem~\ref{thm:Xequalsb}. Point~\ref{strategy:2} is fulfilled by applying the Theorem~\ref{thm:constructionstandardextension} (or by applying Proposition~\ref{prop:NicePseudoIwasawa} for diagonal nice nilsoliton). The points~\ref{strategy:3} and~\ref{strategy:4} on Lie algebras are standard problems. Once we know the pseudo-Iwasawa extension from point~\ref{strategy:2}, we can compute the space of closed $2$-forms and check if there is a nondegenerate one. Moreover, since the Einstein metric is also given by the previous point, we can easily compute the Levi-Civita connection and find a suitable symplectic parallel $2$-form. Finally, to deal with point~\ref{strategy:4}, we define an endomorphism $E$ such that $\omega(X,Y)=g(EX,Y)$, and we force it to be an integrable (para-) complex structure to achieve the final result, i.e. we impose that $N_E=0$ (or equivalently $\nabla E =0$) and that $E^2=\pm\id$.

In the end, if we restrict to look for nice diagonal \ref{cond:nil4}-type nilsoliton metrics most of this strategy becomes a linear problem, and hence is quite easy to deal with it.

Note that by Remark~\ref{rem:RankOneNiceExtension}, the second point~\ref{strategy:2} can be simplified by considering only rank-one extension of nilsoliton metric. As an example, we apply this strategy to some nilsoliton metrics obtained in~\cite{ContiRossi:NiceNilsolitons}: to this end, we recall in the following Table~\ref{Table2:NilLowDim} the low dimensional nilpotent Lie algebra and their Nikolayevsky derivation $N$ (note that up to dimension $5$ all the nilpotent Lie algebras are nice). The names refer to the notation used in~\cite{ContiRossi:Construction}, namely there are first the dimensions of the lower central series and a counter after the column.

\begin{table}[thp]
{\small\centering
\begin{tabular}{>{\ttfamily}r L L}
\toprule
\textnormal{Name} & \g & N \\
\midrule
1:1&0&(1)\\[2pt]
2:1&0,0&(1,1)\\[2pt]
31:1&0,0,e^{12}&2/3(1,1,2)\\
3:1&0,0,0&(1,1,1)\\[2pt]
421:1&0,0,e^{12},e^{13}&1/3(1,2,3,4)\\
41:1&0,0,0,e^{12}&1/3(2,2,3,4)\\
4:1&0,0,0,0&(1,1,1,1)\\[2pt]
5321:1&0,0,e^{12},e^{13},e^{14}&1/12(2,9,11,13,15)\\
5321:2&0,0,e^{12},e^{13},e^{14}+e^{23}&3/11(1,2,3,4,5)\\
532:1&0,0,e^{12},e^{13},e^{23}&5/12(1,1,2,3,3)\\
521:1&0,0,0,0,e^{12},e^{14}&1/3(1,2,3,3,4)\\
521:2&0,0,0,e^{12},e^{24}+e^{13}&1/7(4,3,6,7,10)\\
52:1&0,0,0,e^{12},e^{13}&1/4(2,3,3,5,5)\\
51:1&0,0,0,0,e^{12}&1/3(2,2,3,3,4)\\
51:2&0,0,0,0,e^{12}+e^{34}&3/4(1,1,1,1,2)\\
5:1&0,0,0,0,0&(1,1,1,1,1)\\
\bottomrule
\end{tabular}
\caption{Nice nilpotent Lie algebras of dimension $\leq 5$ and their diagonal Nikolayevsky derivation\label{Table2:NilLowDim}}
}
\end{table}

\begin{example}\label{ex:HeisenbergPseudoExtensionPK}
Consider the $3$-dimensional Heisenberg Lie algebra $\lie{h}$ with structure equations give by:
\[(0,0,e^{12}).\]
By~\cite{ContiRossi:NiceNilsolitons} we know it admits a diagonal nilsoliton metric $g$, and we can compute such a metric using Theorem~\ref{thm:Xequalsb}. The solution is not unique, and we have that
\[g=g_1e^1\otimes e^1 + g_2e^2\otimes e^2-\frac{2g_1g_2\lambda}{3}e^3\otimes e^3\]
is a nilsoliton metric satisfying $\Ric_g=\lambda(\id-N)$ where $N$ is the diagonal Nikoalyevsky derivation of $\lie{h}$, i.e.
\[N=\frac{2}{3}e^1\otimes e_1+\frac{2}{3}e^2\otimes e_2+\frac{4}{3}e^3\otimes e_3.\]
Thus the solvable Lie algebra $\tilde{\lie{g}}$ of dimension $4$ given by
\[\left(\frac{2}{3}e^{14},\frac{2}{3}e^{24},\frac{4}{3}e^{34}+e^{12},0\right)\]
admits an Einstein metric $\tilde{g}=g-\frac{\Tr N}{\lambda}e^4\otimes e^4$. We fix $\lambda =-\frac{1}{2}$, hence
\[\tilde{g}=g_1e^1\otimes e^1 + g_2e^2\otimes e^2+\frac{g_1g_2}{3}e^3\otimes e^3+\frac{16}{3}e^4\otimes e^4.\]
A simple linear computation show that a generic closed $2$-form can be write as
$ye^{12}+\frac{4y}{3}e^{34}+\Span\{e^{14},e^{24}\}$
and imposing $\dd\omega=0$, $\nabla \omega= 0$ and $\omega^2\neq0$ one gets that
\[\omega= y e^{12}+\frac{4 }{3}ye^{34},\qquad y\neq0.\]
At this point, we compute the endomorphis $E$ such that $g(EX,Y)=\omega(X,Y)$:
\[
E= -\frac{y}{g_1}e^2\otimes e_1 +\frac{y}{g_2} e^1\otimes e_2-\frac{4y}{g_1g_2}e^4\otimes e_3+\frac{y}{4} e^{3}\otimes e_4.
\]
Finally, we impose that $\nabla E=0$ (equivalently that $N_E=0$) and $E^2=\pm \id$. For $E=-\Id$ one gets $g_2=\frac{y^2}{g_1}$ while for $E=\Id$ one gets $g_2=-\frac{y^2}{g_1}$. In the end we have that
\[\left\{
\begin{aligned}
&\tilde{g}=g_1e^1\otimes e^1 +\frac{y^2}{g_1} e^2\otimes e^2+\frac{y^2}{3}e^3\otimes e^3+\frac{16}{3}e^4\otimes e^4,\\
&\omega= y e^{12}+\frac{4 }{3}ye^{34},\\
&J(e_1)=\frac{g_1}{y} e_2,\qquad J(e_2)= -\frac{y}{g_1} e_1,\qquad J(e_3)=\frac{y}{4} e_4,\qquad J(e_4)=-\frac{4}{y}e_3,
\end{aligned}
\right.\]
is a pseudo-k\"{a}hler Einstein structure; and
\[\left\{
\begin{aligned}
&\tilde{g}=g_1e^1\otimes e^1 -\frac{y^2}{g_1} e^2\otimes e^2-\frac{y^2}{3}e^3\otimes e^3+\frac{16}{3}e^4\otimes e^4,\\
&\omega=y e^{12}+\frac{4 }{3}ye^{34},\\
&K(e_1)=-\frac{g_1}{y}e_2,\qquad K(e_2)= -\frac{y}{g_1} e_1,\qquad K(e_3)=\frac{y}{4}e_4,\qquad K(e_4)=\frac{4}{y}e_3,
\end{aligned}
\right.\]
is a para-k\"{a}hler Einstein structure. In both cases $\Ric_{\tilde{g}}=-\frac{1}{2}\id$.
\end{example}

We would like to make another example, in dimension $6$.
\begin{example}\label{ex:51:2PseudoExtensionPK}
Consider the $5$-dimensional Lie algebra \texttt{51:2} with structure equation give by:
\[(0,0,0,0,e^{12}+e^{34}).\]
By \cite{ContiRossi:NiceNilsolitons} we know it admits a diagonal nilsoliton metric $g$, and we can compute such a metric using Theorem~\ref{thm:Xequalsb}, hence we consider the rank-one pseudo-Iwasawa solvable extension obtained using the Nikolayevsky derivation $N$, namely the Lie algebra $\tilde{\g}$
\[\left(\frac{3}{4}e^{16},\frac{3}{4}e^{26},\frac{3}{4}e^{36},\frac{3}{4}e^{46},e^{12}+e^{34}\frac{6}{4}e^{56},0\right).\]
Proceding as before, on $\tilde{\g}$ we find a pseudo-k\"{a}hler Einstein structure given by:
\[\left\{
\begin{aligned}
&\tilde{g}=g_1e^1\otimes e^1 +\frac{y^2}{g_1} e^2\otimes e^2+g_3e^3\otimes e^3 +\frac{y^2}{g_3} e^4\otimes e^4+\frac{y^2}{4}e^5\otimes e^5+9e^6\otimes e^6,\\
&\omega= y e^{12}+ ye^{34}+\frac{3 }{2}ye^{56},\\
&J(e_1)=\frac{g_1}{y} e_2,\qquad J(e_2)= -\frac{y}{g_1} e_1,\qquad J(e_3)=\frac{g_3}{y} e_4,\\
&J(e_4)= -\frac{y}{g_3} e_3,\qquad J(e_5)=\frac{y}{6} e_6,\qquad J(e_6)=-\frac{6}{y}e_5;
\end{aligned}
\right.\]
and a para-k\"{a}hler Einstein structure given by:
\[\left\{
\begin{aligned}
&\tilde{g}=g_1e^1\otimes e^1 -\frac{y^2}{g_1} e^2\otimes e^2+g_3e^3\otimes e^3 -\frac{y^2}{g_3} e^4\otimes e^4-\frac{y^2}{4}e^5\otimes e^5+9e^6\otimes e^6,\\
&\omega= y e^{12}+ ye^{34}+\frac{3 }{2}ye^{56},\\
&K(e_1)=-\frac{g_1}{y} e_2,\qquad K(e_2)= -\frac{y}{g_1} e_1,\qquad K(e_3)=-\frac{g_3}{y} e_4,\\
&K(e_4)= -\frac{y}{g_3} e_3,\qquad K(e_5)=\frac{y}{6} e_6,\qquad K(e_6)=\frac{6}{y}e_5.
\end{aligned}
\right.\]
In both cases $\Ric_{\tilde{\g}}=-\frac{1}{2}$.
\end{example}

\begin{remark}\label{rem:NotUnimodular}
Since we use nilsoliton of type~\ref{cond:nil4}, it follows from Corollary~\ref{cor:Hzero} that the pseudo-Iwasawa extension are not unimodular. On the other hand, Einstein nilpotent Lie algebras are unimodular, and most of the Lie algebras we produced in~\cite{ContiRossi:EinsteinNice} have rational constant; therefore the associated Lie group $G$ has a lattice $\Gamma$ and the left-invariant Einstein metric on $G$ induces an Einstein metric on the nilmanifold $\Gamma\backslash G$.
\end{remark}

A clear obstruction to the construction of pseudo-K\"ahler and para-K\"ahler metrics is the existence of a symplectic structure. For the rank-one pseudo-Iwasawa extension of Remark~\ref{rem:RankOneNiceExtension}, one can easily check if the target Lie algebra admits such a structure. This test produces the following result.
\begin{lemma}\label{lem:PseudoIwasawaSymplectic}
Let $\g$ be a nice nilpotent Lie algebra of dimension $\leq 5$ and let $g$ be a diagonal \ref{cond:nil4}-type nilsoltion. Then the rank-one pseudo-Iwasawa extension $(\tilde{\g},\tilde{g})$ admits a symplectic structure if, and only if, it is the pseudo-Iwasawa extension of one the following Lie algebras:
\begin{center}
\begin{tabular}{>{\ttfamily}rL}
1:1&(0)\\
31:1&(0,0,e^{12})\\
5321:2&(0,0,e^{12},e^{13},e^{14}+e^{23})\\
521:2&(0,0,0,e^{12},e^{24}+e^{13})\\
51:2&(0,0,0,0,e^{12}+e^{34})
\end{tabular}
\end{center}
\end{lemma}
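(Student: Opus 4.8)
The plan is to reduce the statement to a finite, purely computational check. By Remark~\ref{rem:RankOneNiceExtension}, the rank-one pseudo-Iwasawa extension $\tilde{\g}=\g\rtimes_N\Span{e_0}$ associated to a diagonal \ref{cond:nil4}-type nilsoliton depends, up to isomorphism, only on $\g$ and on its diagonal Nikolayevsky derivation $N$, not on the particular nilsoliton metric chosen. Hence the existence of a symplectic structure on $\tilde{\g}$ is a property of the pair $(\g,N)$ alone, and it suffices to run the test over the finitely many nice nilpotent Lie algebras $\g$ of dimension $\leq 5$ admitting a \ref{cond:nil4}-type diagonal nilsoliton. First I would extract from \cite{ContiRossi:NiceNilsolitons} (via Theorem~\ref{thm:Xequalsb} and its low-dimensional consequences) the list of such $\g$, which is a sublist of Table~\ref{Table2:NilLowDim}; for each one, the structure equations of $\tilde{\g}=\g\rtimes_N\Span{e_0}$ are written down explicitly as in Examples~\ref{ex:HeisenbergPseudoExtensionPK} and~\ref{ex:51:2PseudoExtensionPK} (the $N$-action adds terms $n_i e^{i0}$ to $\dd e^i$ and sets $\dd e^0=0$).

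Next, for each candidate $\tilde{\g}$ of dimension $m$ (here $m\in\{2,4,6\}$), I would compute the space $Z^2(\tilde{\g})$ of closed $2$-forms by solving the linear system $\dd\omega=0$ for $\omega=\sum_{i<j}\omega_{ij}e^{ij}$, using the Chevalley--Eilenberg differential read off from the structure equations. A symplectic structure exists if and only if some $\omega\in Z^2(\tilde{\g})$ is nondegenerate, equivalently $\omega^{m/2}\neq 0$; since the Pfaffian is a polynomial in the free parameters $\omega_{ij}$, this is decided by checking whether that polynomial is identically zero. Carrying this out case by case yields exactly the five extensions listed — for the algebras \texttt{1:1}, \texttt{31:1}, \texttt{5321:2}, \texttt{521:2}, \texttt{51:2} one exhibits an explicit nondegenerate closed $2$-form (for \texttt{31:1} and \texttt{51:2} these are essentially the $\omega$ appearing in Examples~\ref{ex:HeisenbergPseudoExtensionPK} and~\ref{ex:51:2PseudoExtensionPK}), while for every remaining $\g$ in the list one shows the Pfaffian of the generic closed $2$-form vanishes identically, so no symplectic form exists.

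The only genuinely delicate point is bookkeeping: making sure the list of nice nilpotent Lie algebras of dimension $\leq 5$ with a \ref{cond:nil4}-type diagonal nilsoliton is complete and correctly transcribed, and that the semidirect-product extension by $N$ is formed correctly in each case (in particular that $N$ is indeed a derivation, which is automatic for nice algebras with $N$ diagonal in the nice basis). The rest is a routine finite linear-algebra computation — solving small homogeneous linear systems and evaluating a handful of Pfaffians — and could in principle be delegated to a computer algebra check. I expect the main obstacle to be purely organizational rather than conceptual: there is no single hard estimate, only the need to be exhaustive over the small classification list.
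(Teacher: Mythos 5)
Your proposal is correct and follows essentially the same route as the paper: exclude the even-dimensional $\g$ (whose extensions are odd-dimensional), form the rank-one extension by the Nikolayevsky derivation via Proposition~\ref{prop:NicePseudoIwasawa} and Remark~\ref{rem:RankOneNiceExtension}, and decide existence of a nondegenerate closed $2$-form by a direct finite computation, exhibiting explicit symplectic forms in the five surviving cases. No gaps.
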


\begin{proof}
We can exclude all the Lie algebra $\g$ of even dimension, since their rank-one extension can not admit any symplectic structure.

For the remaining case, we know by \cite[Theorem~2.1]{ContiRossi:NiceNilsolitons} that all those examples admit a nilsoliton of type~\ref{cond:nil4}, and Theorem~\ref{thm:Xequalsb} tells us that the derivation $D$ appearing in~\eqref{eq:nilsoliton} is a multiple of the Nikolayevsky derivation $N$. Hence, reasoning as in Remark~\ref{rem:RankOneNiceExtension}, the structure equations of the pseudo-Iwasawa extension $\tilde{\g}$ are easily computed using Proposition~\ref{prop:NicePseudoIwasawa} and Table~\ref{Table2:NilLowDim}: for example, the pseudo-Iwasawa extension of
\texttt{521:2} is the solvable Lie algebra $\tilde{\g}=\g\rtimes_N\Span{e_6}$, where $N$ is the diagonal Nikolayevsky derivation and $[e_6,e_i]=N(e_i)$. Explicitly, the structure equations are
\[\left(\frac{4}{7}e^{16},\frac{3}{7}e^{26},\frac{6}{7}e^{36},e^{46}+e^{12},\frac{10}{7}e^{56}+e^{24}+e^{13},0\right).\]

Finally, a direct computation of the space of $2$-forms shows that the Lie algebras not listed in the statement do not have any nondegenerate closed $2$-form. For the other Lie algebraa, we explicitly give the structure equations of the extension and a symplectic $2$-form $\omega$ in the Table~\ref{table:FormeSimplettiche}.
\end{proof}

{\setlength{\tabcolsep}{4pt}
\begin{table}[thp]
{\small\centering
\begin{tabular}{>{\ttfamily}r C L}
\toprule
\textnormal{Name of $\g$} & \textnormal{Rank-one extension }\tilde{\g} & \textnormal{Symplectic form } \omega\\
\midrule
1:1&e^{12},0&e^{12}\\[5pt]
31:1&\dfrac{2}{3}e^{14},\dfrac{2}{3}e^{24},\dfrac{4}{3}e^{34}+e^{12},0&e^{12}+\dfrac{4}{3}e^{34}\\[5pt]
5321:2&\dfrac{3}{11}e^{16},\dfrac{6}{11}e^{26},\dfrac{9}{11}e^{36}+e^{12},\dfrac{12}{11}e^{46}+e^{13},\dfrac{15}{11}e^{56}+e^{14}+e^{23},0& e^{14}+e^{23}+\dfrac{15}{11}e^{56}\\[5pt]
521:2&\dfrac{4}{7}e^{16},\dfrac{3}{7}e^{26},\dfrac{6}{7}e^{36},e^{46}+e^{12},\dfrac{10}{7}e^{56}+e^{24}+e^{13},0&
e^{13}+e^{24}+\dfrac{10}{7}e^{56}\\[5pt]
51:2&\dfrac{3}{4}e^{16},\dfrac{3}{4}e^{26},\dfrac{3}{4}e^{36},\dfrac{3}{4}e^{46},\dfrac{6}{4}e^{56}+e^{12}+e^{34},0&
e^{12}+e^{34}+\dfrac{3 }{2}e^{56}\\
\bottomrule
\end{tabular}
\caption{\label{table:FormeSimplettiche} Symplectic rank-one pseudo-Iwasawa extensions of diagonal nice nilsoltions of dimension $\leq6$}
}
\end{table}
}

We conclude this section classifying the rank-one pseudo-Iwasawa extension of nice nilsolton diagonal metric admitting a pseudo-K\"ahler or para-K\"ahler Einstein metric with nonzero scalar curvature.
\begin{theorem}\label{thm:NiceDiagonalPseudoIwasawaLowDim}
Let $\g$ be a nice nilpotent Lie algebra of dimension $\leq 5$ with a diagonal nilsoliton $g$ and let $(\tilde{\g},\tilde{g})$ be its rank-one pseudo-Iwasawa extension. Then the following are equivalent:
\begin{enumerate}
\item $(\tilde{\g},g)$ admits an Einstein pseudo-K\"ahler structure with nonzero scalar curvature;
\item $(\tilde{\g},g)$ admits an Einstein para-K\"ahler structure with nonzero scalar curvature;
\item depending on the dimension $\dim\tilde{\g}$, the pair $(\g,\tilde{\g})$ are isomorphic respectively to
\begin{itemize}
\item $(0)\quad\text{and}\quad (e^{12},0)$; or
\item $(0,0,e^{12})\quad\text{and}\quad\left(\dfrac{2}{3}e^{14},\dfrac{2}{3}e^{24},\dfrac{4}{3}e^{34}+e^{12},0\right)$; or
\item $(0,0,0,0,e^{12}+e^{34})\quad\text{and}\quad\left(\dfrac{3}{4}e^{16},\dfrac{3}{4}e^{26},\dfrac{3}{4}e^{36},\dfrac{3}{4}e^{46},\dfrac{6}{4}e^{56}+e^{12}+e^{34},0\right)$.
\end{itemize}
\end{enumerate}
\end{theorem}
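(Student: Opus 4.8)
The plan is to reduce the statement to a short list of candidate Lie algebras by means of a symplectic obstruction, settle the ``only if'' direction in the positive cases by exhibiting the structures explicitly, and rule out the two remaining candidates by a direct (but finite) computation.

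First I would note that an Einstein pseudo-K\"ahler or para-K\"ahler structure on $(\tilde\g,\tilde g)$ carries in particular a nondegenerate closed $2$-form, i.e.\ a symplectic structure on $\tilde\g$. Hence, by Lemma~\ref{lem:PseudoIwasawaSymplectic}, the rank-one pseudo-Iwasawa extension $\tilde\g$ must be the one of the Lie algebras \texttt{1:1}, \texttt{31:1}, \texttt{5321:2}, \texttt{521:2} or \texttt{51:2}. Moreover, since $g$ is a type-\ref{cond:nil4} nilsoliton, Proposition~\ref{prop:NicePseudoIwasawa} (or Theorem~\ref{thm:constructionstandardextension}) guarantees that the induced metric $\tilde g$ is Einstein with Einstein constant $\lambda\neq0$; so ``with nonzero scalar curvature'' is automatic here and is not an extra hypothesis. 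By Remark~\ref{rem:RankOneNiceExtension} the extension $\tilde\g$ depends only on $\g$ (not on the chosen diagonal nilsoliton) and is the one obtained from the Nikolayevsky derivation, with the explicit structure equations of Table~\ref{table:FormeSimplettiche}; the metric $\tilde g$ is the one of Proposition~\ref{prop:NicePseudoIwasawa}, carrying the free parameters $g_i$ that parametrize the diagonal nilsolitons.

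For the implications (3)$\Rightarrow$(1) and (3)$\Rightarrow$(2) there is nothing to add: Example~\ref{ex:AffPK} (for \texttt{1:1}), Example~\ref{ex:HeisenbergPseudoExtensionPK} (for \texttt{31:1}) and Example~\ref{ex:51:2PseudoExtensionPK} (for \texttt{51:2}) produce, on the corresponding extension and for the general diagonal nilsoliton parameters, both an Einstein pseudo-K\"ahler structure $(\tilde g,J,\omega)$ and an Einstein para-K\"ahler structure $(\tilde g,K,\omega)$, all with $\Ric_{\tilde g}=-\tfrac12\id\neq0$. The substance of the proof is therefore (1)$\Rightarrow$(3) and (2)$\Rightarrow$(3), i.e.\ showing that the rank-one pseudo-Iwasawa extensions of \texttt{5321:2} and of \texttt{521:2} admit \emph{neither} an Einstein pseudo-K\"ahler \emph{nor} an Einstein para-K\"ahler structure. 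For this I would run points~\ref{strategy:3} and~\ref{strategy:4} of the strategy of Section~\ref{sec:strategy} as a non-existence argument. From the explicit structure equations I first compute the full space of closed $2$-forms; then, using the Levi-Civita connection of $\tilde g$ (explicit once the $g_i$ are fixed), I intersect with the $\nabla$-parallel ones, obtaining the space of parallel $2$-forms as a parameter-dependent linear subspace (recall that $\nabla\omega=0$ implies $\dd\omega=0$, so the closedness is already forced). For each nondegenerate $\omega$ in this subspace I form the $\tilde g$-skew endomorphism $E$ with $\tilde g(EX,Y)=\omega(X,Y)$; since $\nabla\tilde g=0$, parallelism of $\omega$ gives $\nabla E=0$ and hence $N_E=0$ automatically, so the only remaining obstruction is the purely algebraic condition $E^2=\pm\id$. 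I then show that, for each of the two Lie algebras, for each of the two signs, and for every admissible choice of the parameters $g_i$, the equation $E^2=\pm\id$ is incompatible with $\omega$ being simultaneously parallel and nondegenerate -- either because every parallel closed $2$-form turns out to be degenerate, or because imposing $E^2=\pm\id$ collapses the $g_i$ to a value outside the admissible range (in particular making $\tilde g$ degenerate).

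I expect this last step to be the main obstacle: it amounts to verifying the inconsistency of a concrete polynomial system in the metric parameters, carried out separately for \texttt{5321:2} and \texttt{521:2} and for each sign of $E^2$; it is routine but not short, and it is where the special role of the three surviving Lie algebras manifests itself. Once (1)$\Leftrightarrow$(3) and (2)$\Leftrightarrow$(3) are established, the equivalence (1)$\Leftrightarrow$(2) follows formally, which completes the proof.
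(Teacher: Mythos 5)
Your proposal is correct and follows essentially the same route as the paper: reduce to the five symplectic candidates via Lemma~\ref{lem:PseudoIwasawaSymplectic}, invoke Examples~\ref{ex:AffPK}, \ref{ex:HeisenbergPseudoExtensionPK} and~\ref{ex:51:2PseudoExtensionPK} for (3)$\Rightarrow$(1),(2), and kill the extensions of \texttt{5321:2} and \texttt{521:2} by showing no compatible parallel symplectic form exists. The only difference is that the paper's exclusion step is shorter than you anticipate: a single covariant-derivative evaluation per algebra (e.g.\ $\nabla_{e_1}\omega(e_2,e_4)=-\tfrac{y_{2,3}g_4}{2g_3}$ for \texttt{5321:2}) already shows that parallelism forces degeneracy of every closed $2$-form, so the branch of your argument involving $E^2=\pm\id$ is never needed.
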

\begin{proof}
The Examples~\ref{ex:HeisenbergPseudoExtensionPK}, \ref{ex:51:2PseudoExtensionPK} and~\ref{ex:AffPK} show that $3$ implies both $1$ and $2$.
For the converse, note that Lemma~\ref{lem:PseudoIwasawaSymplectic} reduce the possibility to only $5$ possible pseudo-Iwasawa extension, which are listed explicitly in Table~\ref{table:FormeSimplettiche}. Now observe that by Proposition~\ref{prop:NicePseudoIwasawa} and the definition of rank-one pseudo-Iwasawa extension the Einstein metric $\tilde{g}$ is diagonal on $\tilde{\g}$ with respect to the basis given in Table~\ref{table:FormeSimplettiche}.

We compute the Levi-Civita connection of the Einstein metric $\tilde{g}=g_ie^i\otimes e ^i$, and taking a closed $2$-form $\omega=\sum y_{i,j}e^{ij}$ we look for a suitable symplectic parallel $2$-form.

Consider the extension of \texttt{5321:2}. Then we have
\begin{align*}
\nabla_{e_1}\omega(e_2,e_4)&=-\omega(\nabla_{e_1}e_2,e_4)-\omega(e_2,\nabla_{e_1}e_4)\\
&=-\omega(-\frac{1}{2}e_3,e_4)-\omega(e_2,\frac{g_4}{2g_3}e_3-\frac{1}{2}e_5)=-\frac{y_{2,3}g_4}{2g_3}
\end{align*}
because any closed $2$-form satisfies $\omega(e_3,e_4)=\omega(e_2,e_5)=0$ and $\omega(e_2,e_3)=y_{2,3}\neq 0$.

Similarily, for the extension of \texttt{521:2}, we got
\begin{align*}
\nabla_{e_1}\omega(e_1,e_2)&=-\omega(\nabla_{e_1}e_1,e_2)-\omega(e_1,\nabla_{e_1}e_2)\\
&=-\omega(\frac{4g_1}{7g_6}e_6,e_4)-\omega(e_1,-\frac{1}{2}e_4)=+\frac{4g_1y_{2,6}}{7g_6}
\end{align*}
because any closed $2$-form satisfies $\omega(e_1,e_4)=0$ and $\omega(e_2,e_6)=y_{2,6}\neq 0$.

We conclude that neither the pseudo-Iwasawa Einstein extension of \texttt{5321:2} nor the pseudo-Iwasawa Einstein extension of \texttt{521:2} can admit a parallel symplectic structure, hence they have no pseudo-K\"ahler and no para-K\"ahler structure compatible with the Einstein metric.
\end{proof}

\section{More Examples, Ideas and Remarks}\label{sec:examples}

In this Section we will list some possible implementations of our strategy as well as different useful approach to construct special Einstein metrics on solvable Lie algebra with nonzero scalar curvature.

\medskip
\textbf{1.}
Instead of working with diagonal nice nilsoliton metric, one can look for non diagonal nilsoliton metrics, and then apply Theorem~\ref{thm:constructionstandardextension} to construct an Einstein solvable Lie algebra. In this case the challenge part is to find a nilsoliton metric. With this approach, we were able to find a structure on a pseudo-Iwasawa extension of \texttt{521:2}, which was excluded from Theorem~\ref{thm:NiceDiagonalPseudoIwasawaLowDim}.

\begin{example}\label{ex:521:2ParaStorta}
The metric $g=g_1e^1\odot e^3-g_1e^2\odot e^4-\frac{g_1^2}{4}e^{5}\otimes e^{5}$ is a \ref{cond:nil4}-type nilsoliton on the Lie algebra \texttt{521:2}, with a semisimple diagonal derivation $D$ (thus $D$ is multiple of the Nikolayevsky derivation). Hence by Theorem~\ref{thm:constructionstandardextension} on the pseudo-Iwasawa solvable extension
\[\left(\dfrac{4}{7}e^{16},\dfrac{3}{7}e^{26},\dfrac{6}{7}e^{36},e^{46}+e^{12},\dfrac{10}{7}e^{56}+e^{24}+e^{13},0\right)\]
the metric $\tilde{g}=g_1e^1\odot e^3-g_1e^2\odot e^4-\frac{g_1^2}{4}e^{5}\otimes e^{5}+\frac{400}{49}e^{6}\otimes e^{6}$ is Einstein ($\Ric_{\tilde{\g}}=-\frac{1}{2}$). Moreover, a straightforward computation prove that the triple
\[\left\{
\begin{aligned}
&\tilde{g},\qquad\omega= g_1e^{13}+g_1e^{24}+\dfrac{10}{7}g_1e^{56},\\
&K(e_1)=e_1,\qquad K(e_2)= -e_2,\qquad K(e_3)=- e_3,\qquad K(e_4)= -e_4,\\
&K(e_5)=-\dfrac{10}{7}g_1 e_6,\qquad K(e_6)=\dfrac{10}{7}g_1e_5,
\end{aligned}
\right.\]
defines a para-K\"ahler structure compatible with the metric $\tilde{g}$.
\end{example}

\medskip
\textbf{2.} Given a \ref{cond:nil4}-type nilsoliton metric, another possibility is to compute a higher rank pseudo-Iwasawa extension: by Theorem~\ref{thm:constructionstandardextension} this can be achieved computing a set $\lie{a}$ of commuting self-adjoint derivation containing $D$, such that the scalar product induced by the trace is nondegenerate.

\begin{example}\label{ex:AbelianRank2ExtensionPseudoK}
Consider the abelian Lie algebra of dimension $2$ with basis $\{e_1,e_2\}$ together with the Lorentzian nilsoltion metric $g=\frac{1}{4} (-e^1\otimes e^1+e^2\otimes e^2)$, and consider the self-adjoint derivations $N=e^1\otimes e_1+e^2\otimes e_2$ and $D=e^2 \otimes e_1- e^1\otimes e_2$ (note that $N$ is the Nikolayevsky derivation). Those derivations commute and $\langle,\rangle_{\Tr}$ is nondegenerate on $\Span\{N,D\}$, so we have a $4$-dimensional solvable pseudo-Iwasawa Einstein extension $\tilde{\g}$ with structure equation given by:
\[(e^{13}+e^{24} ,e^{23} - e^{14},0,0)\]
and Einstein metric $\tilde{g}$ given by
\[\tilde{g}=\frac{1}{4} (-e^1\otimes e^1+e^2\otimes e^2)+4(e^3\otimes e^3-e^4\otimes e^4)\]
such that $\Ric_{\tilde{g}}=-\frac{1}{2}\id$. Finally, it is a matter of computation to verify that $\omega=-e^{14}+e^{23}$ is a parallel symplectic structure and the endomorphism
\[J= -4e^4\otimes e_1 -4e^3\otimes e_2 +\frac{1}{4}e^2\otimes e_3+\frac{1}{4}e^1\otimes e_4\]
is an integrable complex structure such that $g(J\cdot,\cdot)=\omega(\cdot,\cdot)$. Hence we have an Einstein pseudo-K\"ahler structure.
\end{example}

Note that the metric appearing in Example~\ref{ex:AbelianRank2ExtensionPseudoK} has neutral signature but it is possible to modify the metric to obtain a Riemannian Einstein K\"ahler metric.

\begin{example}
As in the previous example, let $\g$ be the $2$-dimensional abelian Lie algebra $\{e_1,e_2\}$ together with the Lorentzian nilsoltion metric $g=\frac{1}{4} (e^1\otimes e^1-e^2\otimes e^2)$, and consider the self-adjoint derivations $N$ and $D$ as in Example~\ref{ex:AbelianRank2ExtensionPseudoK}. We have a $4$-dimensional solvable pseudo-Iwasawa Einstein extension $\tilde{\g}$ as before.
The Einstein metric $\tilde{g}$ is given by
\[g=\frac{1}{4} (e^1\otimes e^1-e^2\otimes e^2)+4(e^3\otimes e^3-e^4\otimes e^4)\]
such that $\Ric_{\tilde{g}}=-\frac{1}{2}\id$. Again, the $2$-form $\omega$ is a parallel symplectic structure and the endomorphism
\[K= 4e^4\otimes e_1 +4e^3\otimes e_2 +\frac{1}{4}e^2\otimes e_3+\frac{1}{4}e^1\otimes e_4\]
is an integrable para-complex structure such that $g(K\cdot,\cdot)=\omega(\cdot,\cdot)$, hence we have an Einstein para-K\"ahler structure.
\end{example}

As mentioned before, this procedure may lead also to Rimannian Einstein K\"ahler structure. The following example will show a concrete case.

\begin{example}\label{ex:DoubleExtension421:1}
Let $\g$ be the nilpotent Lie algebra \texttt{421:1}. From \cite{ContiRossi:NiceNilsolitons} we know it admits a diagonal nilsoliton metric of \ref{cond:nil4}-type, hence the rank-one pseudo-Iwasawa extension has an Einstein metric. Note that the derivations
\begin{gather*}
N=\frac{1}{3}(e^1\otimes e_1+2e^2\otimes e_2+ 3e^3\otimes e_3+4e^4\otimes e_4),\\
D=\frac{1}{3}(3e^1\otimes e_1-4e^2\otimes e_2-e^3\otimes e_3+24e^4\otimes e_4)
\end{gather*}
commute and are both self-adjoint with respect to the diagonal nilsoliton metric (in fact, $N$ is the Nikolayevsky derivation). Since $\langle,\rangle_{\Tr}$ is nondegenerate in the space $\Span\{N,D\}=\lie{a}$, by Theorem~\ref{thm:constructionstandardextension} we can conclude that the rank-two pseudo-Iwasawa extension $\tilde{\g}=\g\rtimes\lie{a}$ has an Einstein metric. Explicitly $\tilde{\g}$ is given by
\[\left(\frac{1}{3}e^{15}+e^{16},\frac{2}{3}e^{25}-\frac{4}{3}e^{26},e^{35}-\frac{1}{3}e^{36}+e^{12},\frac{4}{3}e^{45}+\frac{2}{3}e^{46}+e^{13},0,0\right)\]
and we can choose the Einstein metric $\tilde{g}$ as
\[\frac{3}{g_1}e^1\otimes e^1+3g_1e^2\otimes e^2+3 g_1e^3\otimes e^3+ 3e^4\otimes e^4+\frac{20}{3}e^5\otimes e^5+\frac{20}{3}e^6\otimes e^6\]
with $\Ric_{\tilde{g}}=-\frac{1}{2}\id$ (the choice of $\tilde{g}$ is not unique, depends on diagonal parameters of the nilsoliton metrics, but doing so we will simplify further computation).
Finally, proceeding as in Example~\ref{ex:HeisenbergPseudoExtensionPK} we find that the triple
\[\left\{
\begin{aligned}
&\tilde{g},\qquad\omega=  3e^{13}+2 g_1e^{25} -4 g_1e^{26}+ 4e^{45}+ 2e^{46},\\
&J(e_1)=\frac{1}{g_1}e_1,\qquad J(e_2)= \dfrac{3g_1}{10}e_5-\dfrac{3g_1}{5}e_6,\qquad J(e_3)=- g_1e_1,\\
&J(e_4)= \dfrac{3}{5}e_5+\dfrac{3}{10}e_6,\qquad
J(e_5)=-\dfrac{2}{3g_1} e_2-\dfrac{4}{3}e_4,\qquad J(e_6)=\dfrac{4}{3g_1}e_2-\dfrac{2}{3}e_4,
\end{aligned}
\right.\]
defines a pseudo-K\"ahler structure compatible with the metric $\tilde{g}$.

Similarly, again on the solvable Lie algebra $\tilde{\g}=\g\rtimes\lie{a}$, we can chose the triple
\[\left\{
\begin{aligned}
&\tilde{g}=-\frac{3}{g_1}e^1\otimes e^1-3g_1e^2\otimes e^2+3 g_1e^3\otimes e^3- 3e^4\otimes e^4+\frac{20}{3}e^5\otimes e^5+\frac{20}{3}e^6\otimes e^6,\\
&\omega=  3e^{13}-2 g_1e^{25} +4 g_1e^{26}+ 4e^{45}+ 2e^{46},\\
&K(e_1)=\frac{1}{g_1}e_1,\qquad K(e_2)=-\dfrac{3g_1}{10}e_5+\dfrac{3g_1}{5}e_6,\qquad K(e_3)=g_1e_1,\\
&K(e_4)= \dfrac{3}{5}e_5+\dfrac{3}{10}e_6,\qquad
K(e_5)=-\dfrac{2}{3g_1} e_2+\dfrac{4}{3}e_4,\qquad K(e_6)=\dfrac{4}{3g_1}e_2+\dfrac{2}{3}e_4,
\end{aligned}
\right.\]
which defines an Einstein para-K\"ahler structure with $\Ric_{\tilde{g}}=\frac{1}{2}\id$
\end{example}

\begin{remark}\label{rem:ManeroMisprint}
We observe that in the example above the parameters of the Einstein pseudo-K\"ahler metric $\tilde{g}$ can be chosen to be all positive, i.e. for $g_1>0$ we have a K\"ahler Einstein complex structure. This solvable Lie algebra $\tilde{\g}$ of Example~\ref{ex:DoubleExtension421:1} was studied in~\cite[Theorem~3.2.2]{Manero:phdThesis} (see also~\cite[Theorem~3.1]{FerFinMan:G2EinSolv}) where it is stated that $\tilde{\g}$ admits only a K\"ahler Einstein almost complex structure which is not integrable. This is in contrast with our previous example: indeed, we find a misprint in the definition of the endomorphism $J$ in the proof of~\cite[Theorem~3.2.2]{Manero:phdThesis}, that might have led the authors to an incorrect conclusion.
\end{remark}

\medskip
\textbf{3.} We noted in Remark~\ref{rem:NotUnimodular} that the pseudo-Iwasawa extension obatained using \ref{cond:nil4}-type nilsoliton are necessarily non unimodular, and hence by \cite[Lemma~6.2]{Milnor:curvatures} they do not have a discrete subgroup with compact quotient.

To have compact Einstein pseudo-K\"ahler or para-K\"ahler manifolds with $s\neq0$ arising as quotient of solvable Lie groups by a discrete lattice, we need to look for unimodular Lie algebras. It may be possible to obtain Einstein unimodular solvable Lie algebras with nonzero scalar curvature using extension of \ref{cond:nil3}-type nilsoliton (see \cite[Corollary~4.12]{ContiRossi:IndefiniteNilsolitons}). Once the solvable extension is fixed, we may apply points~\ref{strategy:3} and~\ref{strategy:4} as before.

Note that \ref{cond:nil3}-type nilsolitons are Einstein metrics with $s\neq0$ on nilpotent Lie algebras, and they appear for dimensions $\geq7$ (\cite{ContiRossi:EinsteinNilpotent,FerFreSan20:7dimEinsteinNilpotent}). In this case the computations may be more challenging.

Moreover, we observe that by Proposition~\ref{pr:ContiRossiInvariantParaK}, this way can not produce any Einstein para-K\"ahler metric with nonzero scalar curvature.

At time of writing, we are not aware of a similar obstruction for Einstein pseudo-K\"ahler metrics on unimodular Lie algebras, nor if they exist: if it is so, the above discussion can lead explicitly to such an example.

\medskip
\textbf{4.} We see that Examples~\ref{ex:HeisenbergPseudoExtensionPK} and~\ref{ex:51:2PseudoExtensionPK} in low dimensions all belong to the family of rank-one pseudo-Iwasawa extension of the generalized Heisenberg Lie algebra. In fact, that construction can be generalized, as we will show.

\begin{example}\label{ex:GeneralizedHeisenberg}
For any $1\leq n\in\N$, let us consider the generalized Heisenberg Lie algebra, that is the nilpotent $(2n+1)$-dimensional Lie algebra $\lie{h}_{2n+1}$ with structure equations given by:
\[\left(0,\dots,0, \sum_{i=1}^{n}e^{2i-1,2i}\right)\]
equivalently, $[e_{2i-1},e_{2i}]=-e_{2n+1}$ for $i=1,\dots,n$.
This is a nice Lie algebra, and its Nikolayevsky derivation is
\[N=\sum_{i=1}^{n} \frac{n+1}{n+2}(e^{2i-1}\otimes e_{2i-1}+e^{2i-1}\otimes e_{2i-1})+2\frac{n+1}{n+2}e^{2n+1}\otimes e_{2n+1}.\]
It admits diagonal nilsoliton pseudo-Riemannian metric of type~\ref{cond:nil4}. In particular, we chose the following diagonal family of metrics
\[g_\eps=\sum_{i=1}^n\left(g_{2i-1}e^{2i-1}\otimes e^{2i-1}-\eps\frac{\alpha^2}{g_{2i-1}}e^{2i}\otimes e^{2i}\right)-\eps\frac{\alpha^2}{n+1}e^{2n+1}\otimes e^{2n+1},\]
where $\eps\in\{+1,-1\}$ and $\alpha,\ g_{2i-1}\neq 0$. An easy computation shows that $\Ric_\eps = -\frac{1}{2}\Id+\frac{1}{2} N$.

Consider now the rank-one pseudo-Iwasawa extension $\tilde{\g}=\g\rtimes\Span\{e_{2n+2}\}$ where $[e_{2n+2},e_j]=N e_j$. Namely the structure equations of $\tilde{\g}$ are given by
\[\left( \frac{n+1}{n+2}e^{1,2n+2},\dots,\frac{n+1}{n+2}e^{2n,2n+2}, 2\frac{n+1}{n+2}e^{2n+1,2n+2}+\sum_{i=1}^{n}e^{2i-1,2i},0\right),\]
and by Theorem~\ref{thm:constructionstandardextension} the metric $\tilde{g}_\eps=g_{\eps}+4\frac{(n+1)^2}{(n+2)}{e^{2n+2}\otimes e^{2n+1}}$ is an Einstein metric satisfying $\Ric_{\tilde{g}_\eps}=-\frac{1}{2}\id$. We note that $\tilde{\g}$ is a symplectic nonunimodular solvable Lie algebra of dimension $2n+2$, and we consider the nondegenerate closed $2$-form $\omega$ given by
\[\omega=\alpha \sum_{i=1}^{n}e^{2i-1,2i}+\alpha\frac{(n+1)^2}{n+2}(e^{2n+1,2n+2}).\]
Finally, we consider the endomorphism $J_{\eps}$ given by:
\begin{multline*}
J_{\eps}=\sum_{i=1}^n\left(-\eps\frac{g_{2i-1}}{\alpha}e^{2i-1}\otimes e_{2i}-\frac{\alpha}{g_{2i-1}}e^{2i}\otimes e_{2i-1}\right)\\
+\frac{\alpha}{2(n+1)}e^{2n+1}\otimes e_{2n+2}+\eps\frac{2(n+2)}{\alpha}e^{2n+2}\otimes e_{2n+1}.
\end{multline*}
It is easy to see that $J_{\eps}^2=\eps \Id$. Denoting the Levi-Civita connection of $g_{\eps}$ by $\nabla_{\eps}$, a straightforward computation shows that
\begin{gather*}
\nabla_\eps \omega=\nabla_\eps J_\eps=\nabla_\eps \tilde{g}_\eps=0,\quad N_{J_\eps}=0\\
\dd\omega=0,\quad\tilde{g}_{\eps}(J_\eps X,Y)=\omega(X,Y)\qquad X,Y\in\tilde{\g},
\end{gather*}
hence for $\eps=+1$, the triple $(g_{+1},\omega,J_{+1})$ defines an Einstein para-K\"ahler structure, and for $\eps=-1$ the triple $(g_{-1},\omega,J_{-1})$ defines an Einstein para-K\"ahler structure.

Note that the metric $\tilde{g}_{+1}$ has always neutral signature, while the Einstein metric $\tilde{g}_{-1}$ can have different signature, depending on the signs of the $n$ parameters $g_{2i-1}$. In particular the family of Einstein metric $\tilde{g}_{-1}$ can have any signature $\big(2(1+k),2(n-k)\big)$ for $k=0,\dots,n$.
\end{example}

The last example leads us to the
\begin{theorem}\label{thm:GenerHeisemberg}
On the rank-one pseudo-Iwasawa extension $\tilde{\g}$ of the generalized Heisenberg Lie algebra $\lie{h}_{2n+1}$, there exist an Einstein pseudo-K\"ahler structure and an Einstein para-K\"ahler structure. In particular there exists a Riemannian Einstein K\"ahler structure.
\end{theorem}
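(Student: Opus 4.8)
The plan is to verify the claim along the four-step recipe \ref{strategy:1}--\ref{strategy:4}, everything being carried out explicitly in Example~\ref{ex:GeneralizedHeisenberg}. For step~\ref{strategy:1} I would first observe that $\lie{h}_{2n+1}$ is a nice nilpotent Lie algebra whose root matrix has $n$ identical rows, coming from the brackets $[e_{2i-1},e_{2i}]=-e_{2n+1}$; Theorem~\ref{thm:Xequalsb} (or a direct substitution) then yields the diagonal Nikolayevsky derivation $N$ and shows that the diagonal family $g_{\eps}$ of Example~\ref{ex:GeneralizedHeisenberg} satisfies $\Ric_{g_{\eps}}=-\tfrac12\Id+\tfrac12 N$ with $N\neq0$, hence is a type-\ref{cond:nil4} nilsoliton. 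Step~\ref{strategy:2} is then automatic: by Proposition~\ref{prop:NicePseudoIwasawa} (equivalently Theorem~\ref{thm:constructionstandardextension} with $\lie a=\Span\{N\}$), the rank-one pseudo-Iwasawa extension $\tilde{\g}=\lie{h}_{2n+1}\rtimes_{N}\Span\{e_{2n+2}\}$ carries the Einstein metric $\tilde{g}_{\eps}$ obtained from $g_{\eps}$ by adjoining a positive term in the extension direction, with $\Ric_{\tilde{g}_{\eps}}=-\tfrac12\Id\neq0$.

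For steps \ref{strategy:3}--\ref{strategy:4} I would then take the closed $2$-form $\omega$ and the endomorphism $J_{\eps}$ determined by $\tilde{g}_{\eps}(J_{\eps}\cdot,\cdot)=\omega$ exactly as in Example~\ref{ex:GeneralizedHeisenberg}, and check the conditions $\dd\omega=0$, $J_{\eps}^2=\eps\Id$, $N_{J_{\eps}}=0$ and $\nabla_{\eps}\omega=\nabla_{\eps}J_{\eps}=0$, where $\nabla_{\eps}$ is the Levi-Civita connection of $\tilde{g}_{\eps}$. The feature that makes all of this uniform in $n$ is that the bracket of $\tilde{\g}$, the metric $\tilde{g}_{\eps}$, the form $\omega$ and the endomorphism $J_{\eps}$ all respect the decomposition of $\tilde{\g}$ into the $n$ coordinate two-planes $\Span\{e_{2i-1},e_{2i}\}$ together with the two-plane $\Span\{e_{2n+1},e_{2n+2}\}$; hence $\nabla_{\eps}$ and $N_{J_{\eps}}$ decouple block by block and the verification reduces to a fixed, small set of scalar identities. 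For $\eps=+1$ one gets $J_{+1}^2=\Id$, so an integrable para-complex structure and an Einstein para-K\"ahler structure $(\tilde{g}_{+1},\omega,J_{+1})$; for $\eps=-1$ one gets $J_{-1}^2=-\Id$, so an integrable complex structure and an Einstein pseudo-K\"ahler structure $(\tilde{g}_{-1},\omega,J_{-1})$. Both have nonzero scalar curvature, since $\Ric_{\tilde{g}_{\eps}}=-\tfrac12\Id$.

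For the ``in particular'' part I would then inspect the signature of $\tilde{g}_{-1}$: with respect to the chosen basis its diagonal entries are the free parameters $g_{2i-1}$, their inverses (up to the positive factor $\alpha^2$), the positive number $\alpha^2/(n+1)$, and a positive multiple of $(n+1)^2/(n+2)$ in the extension direction. Choosing every $g_{2i-1}>0$ makes all of them positive, so $\tilde{g}_{-1}$ is a Riemannian metric and $(\tilde{g}_{-1},\omega,J_{-1})$ is a genuine K\"ahler Einstein structure with $\Ric=-\tfrac12\Id$.

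The step I expect to be the real obstacle is the last piece of the second paragraph: since $\tilde{\g}$ has dimension $2n+2$ growing with $n$, one cannot fall back on a symbolic check for a single fixed algebra, so the substance of the argument is to compute $\nabla_{\eps}$ and the Nijenhuis tensor along the block decomposition above and to confirm the finitely many resulting identities by hand. Everything else -- the nilsoliton equation in step~\ref{strategy:1}, the pseudo-Iwasawa extension and the Einstein property in step~\ref{strategy:2}, the closedness of $\omega$ and the signature count -- is routine bookkeeping with the root matrix of $\lie{h}_{2n+1}$.
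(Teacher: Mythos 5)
Your proposal is correct and follows essentially the same route as the paper: the paper's proof of Theorem~\ref{thm:GenerHeisemberg} is precisely the content of Example~\ref{ex:GeneralizedHeisenberg}, namely the diagonal type-\ref{cond:nil4} nilsoliton $g_{\eps}$ on $\lie{h}_{2n+1}$, its rank-one pseudo-Iwasawa extension via the Nikolayevsky derivation, the explicit $\omega$ and $J_{\eps}$ with $J_{\eps}^2=\eps\Id$, and the signature observation (all $g_{2i-1}>0$ for $\eps=-1$) giving the Riemannian K\"ahler case. Your only imprecision is the claim that everything ``decouples'' into independent two-planes --- the brackets $[e_{2i-1},e_{2i}]=-e_{2n+1}$ couple each plane to the center, so the Levi-Civita connection mixes the blocks --- but the computation is still uniform in $i$ and reduces to finitely many scalar identities, as you say.
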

Note that the previous examples of Einstein K\"ahler metrics on rank-one extensions founded in \cite{Manero:phdThesis,FerFinMan:G2EinSolv} belong to the family described in Theorem~\ref{thm:GenerHeisemberg}.

\medskip
\textbf{5.} We can modify our strategy to build examples of other Einstein special metrics. For example, a similar quest for Ricci-flat pseudo-K\"ahler or para-K\"ahler metrics may be addressed with the same strategy explained in Section~\ref{sec:strategy}: by changing slightly the points~\ref{strategy:1} and~\ref{strategy:2} one may use the correspondence between nilsolitons and Ricci-flat extensions developed in~\cite{ContiRossi:IndefiniteNilsolitons}. Indeed, we have noted a large amount of Ricci-flat pseudo-Riemannian metric in literature, so in~\cite{ContiDelBarcoRossi:DiagramInvolutions} we wonder if every nilpotent Lie algebra admits a Ricci-flat metric. Note  that there exist nilpotent Lie algebras not admitting any flat metric  (\cite{AubMed03:PseudoRiemannianFlat}), hence the same question for flat metrics is false.

However, the abundance of Ricci-flat pseudo-Riemannian metrics and the fact that pseudo-K\"ahler and para-K\"ahler Ricci-flat metrics appear also on nilpotent Lie algebras, suggest that the nilsolitons may not be the best approach and we believe that there may be different ways to deal with this specific problem (indeed, the real challenge is to build Einstein pseudo-Riemannian metrics with nonzero scalar curvature).

Another class of interesting metrics are the nearly pseudo-K\"ahler and  nearly para-K\"ahler metrics (see e.g. \cite{SchSch-Hen:NearlyPseudoPara}), i.e. a triple $(g,J_\eps,\omega)$ where $g$ is a pseudo-Riemannian metric, $J_\eps$ is an almost (para-) complex and satisfy $J^2_{\eps}=\eps \Id$ for $\eps=\pm1$, $\omega$ is a $2$-form such that $g(J_\eps X, Y)=\omega(X,Y)$, and the following condition holds
\[(\nabla_X J_\eps)X=0,\]
where $\nabla$ is the Levi-Civita connection of $g$.
Those metrics are not symplectic nor (para-) complex, unless they are pseudo-K\"ahler or para-K\"ahler. We would like to point out that our strategy might be modified to construct nearly pseudo-K\"ahler (or nearly para-K\"ahler) Einstein metric with $s\neq0$ on solvable Lie algebras. In this case, one keeps points~\ref{strategy:1} and~\ref{strategy:2}, that produce the Einstein metric and hence the Levi-Civita connection, and modify the other points to achieve the construction of a nearly pseudo-K\"ahler (or nearly para-K\"ahler) metrics.

\medskip
\textbf{6.} The strategy proposed in Section~\ref{sec:strategy} can be weakened to search for almost complex (resp. almost para-complex) non integrable structure. In this case the symplectic structure and the endomorphis $J$ (resp. $K$) do not need to be parallel with respect to the Levi-Civita connection. The strategy to build such examples may follows points~\ref{strategy:1} and~\ref{strategy:2} to construct a suitable Einstein solvable Lie algebra, and then one modifies points \ref{strategy:3} and \ref{strategy:4}, by allowing $\nabla\omega\neq0$ and $N_{E}\neq0$.

We remark that almost pseudo-K\"ahler or para-K\"ahler Einstein structures with $s\neq0$ may appear also on nilpotent Lie algebras, because the restrictions of Lemma~\ref{lem:FPSInvariantPseudoNilpotent} and of Proposition~\ref{pr:ContiRossiInvariantParaK} do not apply. To our knowledge, the existence of invariant almost pseudo-K\"ahler or para-K\"ahler Einstein metrics with nonzero scalar curvature on nilpotent Lie algebras is an open question, that we feel may be out of scope for this article.

%

\small\noindent Dipartimento di Matematica e Applicazioni, Universit\`a di Milano Bicocca, via Cozzi 55, 20125 Milano, Italy.\\
\texttt{federico.rossi@unimib.it}

\newenvironment{dedication}
  {\clearpage
   \thispagestyle{empty}
   \vspace*{\stretch{1}}
   \itshape
   \raggedleft
  }
  {\par
   \vspace{\stretch{3}}
   \clearpage
  }

\begin{dedication}
{\Large \cursive{Dedicato al mio Babbo}}

\end{dedication}

\end{document}